\def\l@section{\@tocline{1}{0pt}{0pc}{5pc}{}}
\def\l@subsection{\@tocline{2}{0pt}{2.5pc}{5pc}{}}
\definecolor{linkblue}{HTML}{00356B}
\definecolor{linkgold}{HTML}{DA9100}
\definecolor{linkred}{RGB}{159,  29, 53}
\theoremstyle{plain}
\newtheorem{theorem}{Theorem}[section]
\crefname{theorem}{Theorem}{Theorems}
\crefname{conjecture}{Conjecture}{Conjectures}
\newtheorem{proposition}[theorem]{Proposition} 
\crefname{proposition}{Proposition}{Propositions}
\newtheorem{corollary}[theorem]{Corollary} 
\crefname{corollary}{Corollary}{Corollaries}
\newtheorem{lemma}[theorem]{Lemma} 
\crefname{lemma}{Lemma}{Lemmas}
\theoremstyle{definition}
\newtheorem{example}[theorem]{Example}
\crefname{example}{Example}{Examples}
\newtheorem{definition}[theorem]{Definition}
\crefname{definition}{Definition}{Definitions}
\theoremstyle{remark}
\newtheorem*{remark}{Remark}
\crefname{appendix}{Appendix}{Appendices}
\crefname{section}{Section}{Sections}
\crefname{figure}{Figure}{Figures}
\crefname{equation}{Equation}{Equations}
\newcommand{\field}[1]{\mathbf{#1}}
\newcommand{\R}{\field{R}}
\renewcommand{\H}{\field{H}}
\newcommand{\C}{\field{C}}
\newcommand{\poly}[1]{{#1}}
\newcommand{\hull}{H}
\def\titletext{Certain Hyperbolic Regular Polygonal Tiles are Isoperimetric}
\title[Certain Hyp. Reg. Polygonal Tiles are Isoperimetric]{\MakeUppercase{\titletext}}
\author[J. Hirsch]{Jack Hirsch}
\author[K. Li]{Kevin Li}
\author[J. Petty]{Jackson Petty}
\author[C. Xue]{Christopher Xue}
\address{Department of Mathematics \\ Yale University \\ New Haven, CT 06510, USA}
\email[Jack Hirsch]{jack.hirsch@yale.edu}
\email[Kevin Li]{k.li@yale.edu}
\email[Jackson Petty]{jackson.petty@yale.edu}
\email[Christopher Xue]{christopher.xue@yale.edu}
\date{\today}
\begin{document}

\begin{abstract}
The hexagon is the least-perimeter tile in the Euclidean plane. On hyperbolic surfaces, the isoperimetric problem differs for every given area. Cox conjectured that a regular $k$-gonal tile with 120-degree angles is 
isoperimetric for its area. We prove his conjecture and more. 
\end{abstract}

\maketitle
\tableofcontents

\section{Introduction}
In \citeyear{hales}, \textcite{hales} proved that the regular hexagon is the 
least-perimeter unit-area tile of the plane, and furthermore that
no such tiling of a flat torus is better (\cref{fig:hales}). Efforts to 
generalize this result to hyperbolic surfaces have been unsuccessful (see 
\cref{sect:monohedral}). We focus on monohedral tilings (by a single prototile)
and prove that a regular $k$-gon with $120^\circ$ angles is optimal 
(\cref{cor:monohedral-tilings}). 
Unlike Hales's deep proof, our result does not require computers. 

Our main 
\cref{thm:main-result} more generally treats multihedral tilings and varying 
areas averaging $A_k$. It proves that the \emph{maximum} perimeter of any tile in the tiling is greater than the perimeter $P_k$ of the regular $k$-gon $R_k$ with $120^\circ$ angles and area $A_k$. It extends to all real $k$ and 
hence all positive $A_k$.

\setcounter{theorem}{6}
\setcounter{section}{5}
\begin{restatable}{theorem}{mainresult}
\label{thm:main-result}
For real $k > 6$, consider a curvilinear polygonal tiling of a closed hyperbolic 
surface with $N$ tiles of average area $A_k$ and perimeter at most $P_k$. Then $k$ 
is an integer and every tile is equivalent to $\poly{R}_k$.
\end{restatable}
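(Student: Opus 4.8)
The plan is to reduce everything to a tangent-plane (support-plane) estimate of Hales type, after first squeezing the combinatorics out of Gauss--Bonnet. Write $L(n,a)$ for the perimeter of the regular geodesic $n$-gon of area $a$ in $\H$, extended to real $n > 2$ through the standard hyperbolic trigonometric relations, so that $P_k = L(k,A_k)$; recall $A_k = (k-6)\pi/3$. Label the tiles $T_1,\dots,T_N$, with $T_i$ a curvilinear $n_i$-gon of area $a_i$ and perimeter $p_i$, and let $V,E,F=N$ count the vertices, edges, and faces.

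First I would pin down the average number of sides. Since each edge borders two tiles, $\sum_i n_i = 2E$; since each vertex has degree at least three, $2E \ge 3V$; and Gauss--Bonnet on the curvature $-1$ surface gives $\sum_i a_i = -2\pi\chi = 2\pi(E-V-F)$. This total equals $NA_k$ because $A_k$ is the average area. Eliminating $V$ and using $3A_k/\pi = k-6$ yields
\[ \sum_i n_i = 2E \le Nk, \]
with equality exactly when every vertex is trivalent; thus the average number of sides is at most $k$.

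Next comes the isoperimetric input. For a fixed number of geodesic sides the regular polygon minimizes perimeter, so a geodesic $n_i$-gon of area $a_i$ satisfies $p_i \ge L(n_i,a_i)$, with equality only when it is regular (cf.\ \textcite{bezdek}). Curved edges are handled by straightening: replacing each edge by the geodesic joining its endpoints fixes the vertices, the combinatorics, and the total area $NA_k$ while not increasing any perimeter, so it suffices to treat geodesic tilings. (Equivalently, one establishes a curvature-corrected per-tile inequality whose correction telescopes to zero upon summation, because each interior edge is traversed in opposite senses by its two tiles.) Granting the key analytic lemma that $L(n,a)$ is strictly jointly convex in $(n,a)$ with $\partial_n L < 0$, let $\ell(n,a) = P_k + \partial_n L\,(n-k) + \partial_a L\,(a-A_k)$ be its tangent plane at $(k,A_k)$, so $L \ge \ell$ everywhere. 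Summing,
\[ \sum_i p_i \ge \sum_i L(n_i,a_i) \ge \sum_i \ell(n_i,a_i) = NP_k + \partial_n L\Big(\sum_i n_i - Nk\Big) + \partial_a L\Big(\sum_i a_i - NA_k\Big). \]
The $\partial_a L$ term vanishes, and since $\partial_n L < 0$ while $\sum_i n_i - Nk \le 0$, the $\partial_n L$ term is nonnegative. Hence $\sum_i p_i \ge NP_k$, so $\max_i p_i \ge P_k$.

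Under the hypothesis $\max_i p_i \le P_k$ every inequality above is forced to equality: each tile is regular, each $(n_i,a_i)=(k,A_k)$ by strict convexity -- forcing $k\in\Z$ and $a_i = A_k$ -- and every vertex is trivalent. Three congruent regular $k$-gons meeting at each vertex makes every interior angle $2\pi/3$, so each tile is exactly $R_k$. The main obstacle is the analytic lemma: because $L(n,a)$ is only implicit in hyperbolic trigonometry, proving its strict joint convexity and the sign $\partial_n L < 0$ (and giving meaning to real $n$) is the technical heart; the secondary subtlety is justifying the straightening/curvature step, so that edge curvature cannot be exploited to beat the regular tile.
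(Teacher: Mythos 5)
There are two genuine gaps, and together they are fatal to the argument as written.

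First, the combinatorial step $\sum_i n_i \le Nk$ rests on the assumption that every vertex has degree at least three, so that $2E \ge 3V$. The paper's definition of a tiling permits degree-$2$ vertices, and these are not a technicality: a nonconvex tile meets its neighbor at a concave angle of measure greater than $\pi$ at a vertex of degree $2$, and \cref{cor:3} shows that an $n$-gonal tile with $n$ much larger than $k$ can tile precisely by carrying many such flat or concave vertices. For such tilings $\sum_i n_i$ exceeds $Nk$, and since your correction term $\partial_n L\,(\sum_i n_i - Nk)$ then has the wrong sign, the tangent-plane bound no longer yields $\sum_i p_i \ge NP_k$. This is exactly the ``main difficulty'' the paper identifies (nonconvex tiles with many sides); its resolution is \cref{cover}, which passes to the convex hulls of the vertices of degree at least $3$, shows these hulls still \emph{cover} the surface, and controls only their average number of sides via \cref{lemma:vertexdegrees}. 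Because the hulls may gain area and overlap, one can no longer fix areas and compare perimeters, which is why the paper instead fixes the perimeter $P_k$ and compares areas.

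Second, the ``key analytic lemma'' you defer --- strict joint convexity of $L(n,a)$ --- is false. For fixed $n$, as $a \to 0$ the regular hyperbolic $n$-gon is asymptotically Euclidean, so $L(n,a) \sim c_n\sqrt{a}$, which is strictly concave in $a$; hence the graph of $L$ dips \emph{below} the tangent plane at $(k,A_k)$ for tiles of small area, which certainly occur in a multihedral tiling with areas merely averaging $A_k$. A symptom that something must break: your chain of inequalities concludes $\sum_i p_i \ge NP_k$, a bound on the \emph{average} perimeter, but the paper explicitly states that whether a multihedral tiling can beat $R_k$ on average perimeter remains open; the theorem only controls the \emph{maximum} perimeter. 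The paper's substitute for your convexity lemma is the one-variable statement (\cref{lem:area-n}) that the area $A(n)$ of the regular $n$-gon of fixed perimeter $P_k$ is strictly increasing and strictly concave in $n$, combined with \cref{prop:reg-is-best} and Jensen's inequality applied to $\sum A(n_i)$; the covering inequality $\frac{1}{N}\sum \mathrm{Area}(Q_i^*) \ge A_k$ then closes the loop without ever needing a per-tile perimeter lower bound. Your Gauss--Bonnet bookkeeping and your equality analysis at the end are fine as far as they go, but the two steps above are where the real content of the theorem lives.
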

\setcounter{section}{1}

\subsection*{Methods}
To prove $\poly{R}_k$ is the optimal tile of an appropriate closed hyperbolic 
surface, \cref{prop:reg-is-best} first verifies that among $n$-gons of given 
area, the regular one minimizes perimeter. We seem to provide the first complete proof in the literature of this folk theorem, including 
\cref{isostribest} that the least-perimeter triangle of given area is isosceles.
It follows easily that $\poly{R}_k$ has less perimeter than all other $n$-gonal 
tiles for $n \leq k$. For $n>k$, \cref{lemma:vertexdegrees}, using the 
Gauss-Bonnet theorem, shows that in an $n$-gonal tiling, there are on average at 
most $k$ vertices of degree $3$ or more per tile.

The main difficulty concerns nonconvex tiles with many sides. Cutting corners saves perimeter, but the resulting shape does not necessarily tile. \cref{cover} shows that the convex hulls of each tile's vertices of degree at least $3$ cover the surface, albeit with polygons generally of unequal areas and variable number of sides.
By a new concavity \cref{lem:area-n}, $k$-gons would enclose more area with the same perimeter, exhibiting a $k$-gon better than the regular $k$-gon, a contradiction.

\textcite{hales} remarks that \citeauthor{toth43},  who proved the honeycomb conjecture for \emph{convex} tiles  \cite{toth43}, predicted considerable difficulties for general tiles \cite[183]{tothFigures} and said that the conjecture had resisted all attempts at proving it \cite{tothBees}. Removing the convexity hypothesis is the major advance of Hales's work and of ours, although we focus on polygonal monohedral tilings. It remains an open question whether a hyperbolic multihedral tiling with areas $A_k$ could have less \emph{average} perimeter than the regular polygon $R_k$ of area $A_k$ and angles $2\pi/3.$

\begin{figure}
\centering
\includegraphics[scale=0.5]{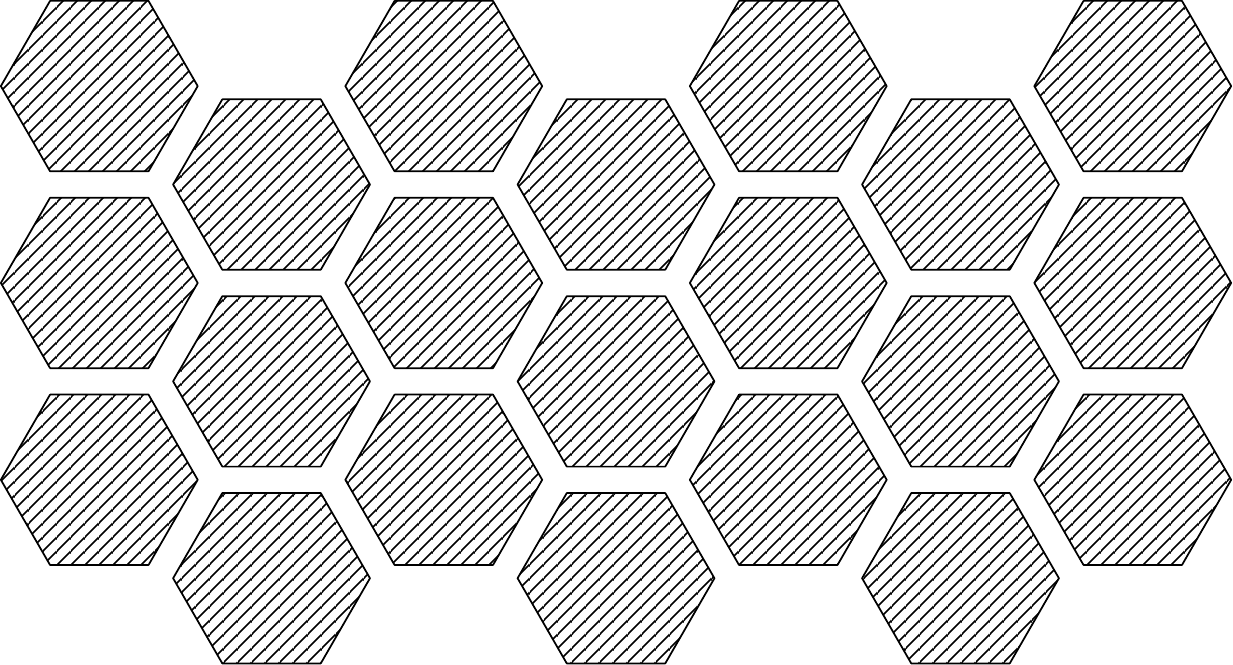}
\caption{\citeauthor{hales} (\citeyear{hales}) proved that regular hexagons provide the least-perimeter equal-area tiling of the plane.}
\label{fig:hales}
\end{figure}

\section{Definitions}

\begin{definition}[Tiling]
Let $M$ be a closed Riemannian surface. A tiling of $M$ is an
embedded multigraph on $M$ with no vertices of degree $0$ or $1$. A tiling is \emph{polygonal}
if 
\begin{enumerate}
\item\label{item:geodesic} every edge is a geodesic;
\item\label{item:disk} every face is an open topological disk.
\end{enumerate}
The oriented boundary of a face of a polygonal tiling is called a polygon.  A tiling is \emph{monohedral} if all faces are congruent. We sometimes consider \emph{curvilinear polygonal} tilings, relaxing condition \eqref{item:geodesic}.
\end{definition}

\begin{remark}
By definition our tilings are edge-to-edge.
When tiling a closed surface with a tile, one copy might be edge-to-edge with itself. An example is tiling a hyperbolic two-holed torus with a single octagon. All eight vertices join at one point, and each edge coincides with another edge. A second example is tiling a one-holed torus by tiling the square fundamental region with thin vertical rectangles. Each rectangle is edge-to-edge with itself at top and bottom, and the two vertices of a vertical edge coincide. 

All polygonal tilings are \emph{connected} multigraphs as a consequence of \eqref{item:disk}. 
\end{remark}

\begin{definition}[Equivalence]
Two polygons $\poly{Q}$ and $\poly{Q}'$ are \emph{equivalent} $\poly{Q}\sim\poly{Q}'$ \ if they are equal after the removal of all vertices of measure $\pi$.
\end{definition}

\begin{remark}
We can't in general define away vertices of measure $\pi$; a vertex in a tiling could, for example, have angles $\pi, \pi/2, \pi/2.$
\end{remark}

\begin{definition}[Convex Hull]
Let $R$ be a polygonal region on a closed hyperbolic surface $M$.
The convex hull $\hull (R)$ is taken in the hyperbolic plane
(with the minimal number of vertices). 
The convex hull of an $n$-gonal region $R$ is a $k$-gonal region for some~$k \leq n$. 
The convex hull has no less area and no more perimeter. 
\end{definition}

\begin{remark}[Existence] 
By standard compactness arguments, there is a perimeter-minimizing tiling for prescribed areas summing to the area of the surface, except that polygons may bump up against 
themselves and each other, possibly with angles of measure 0 and $2\pi$, in the limit.
We think that no such bumping occurs, but we have no proof. 
\end{remark}

\section{Hyperbolic Geometry}\label{sect:hyperbolic-geometry}

We begin with basic results of hyperbolic geometry.
\cref{prop:reg-is-best} seems to provide the first complete proof of the
folk theorem that the regular hyperbolic $n$-gon has least perimeter among all 
$n$-gons of fixed area, based on the fact that the best triangle of given base and area is isosceles (\cref{isostribest}). A key ingredient is the hyperbolic
Heron's formula (\cref{Heron}).
\cref{cor:combination-3-7}
proves that the regular $k$-gon is optimal among polygons with $k$ or fewer sides.


\begin{proposition}
\label{prop:Gauss-Bonnet}
By the Gauss-Bonnet formula, an $n$-gon in the hyperbolic plane with 
interior angles $\theta_1, \dots, \theta_n$ has area $(n-2)\pi-\sum  \theta_i$. In particular, a regular $n$-gon with interior angle $\theta$ has area given by
\begin{equation} \label{eq:A-n-theta}
A(n,\theta) = (n-2)\pi-n\theta. 
\end{equation}
\end{proposition}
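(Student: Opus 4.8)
The plan is to invoke the Gauss--Bonnet theorem for a region with piecewise-geodesic boundary. First I would recall its statement for a compact region $R$ homeomorphic to a disk on a surface of constant Gaussian curvature $K$, whose boundary consists of $n$ geodesic arcs meeting at $n$ vertices: writing $\varepsilon_1,\dots,\varepsilon_n$ for the exterior angles, one has
\begin{equation*}
\int_R K\,dA \;+\; \sum_{i=1}^n \varepsilon_i \;=\; 2\pi\chi(R).
\end{equation*}
The geodesic-curvature contribution along each edge drops out precisely because the edges are geodesics, so only the curvature integral and the vertex terms survive.

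Next I would substitute the data specific to our setting. The hyperbolic plane has $K=-1$, so $\int_R K\,dA = -\operatorname{Area}(R)$; the polygon is a topological disk, so $\chi(R)=1$; and at each vertex the exterior angle is the supplement of the interior angle, $\varepsilon_i = \pi - \theta_i$. Plugging these in gives $-\operatorname{Area}(R) + \sum_i(\pi-\theta_i) = 2\pi$, that is, $-\operatorname{Area}(R) + n\pi - \sum_i\theta_i = 2\pi$. Solving for the area yields $\operatorname{Area}(R) = (n-2)\pi - \sum_i\theta_i$, which is the first claim. Setting $\theta_i = \theta$ for every $i$ in the regular case collapses the sum to $n\theta$ and produces $A(n,\theta) = (n-2)\pi - n\theta$, establishing \eqref{eq:A-n-theta}.

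The computation itself is routine; the only point requiring care is the sign and orientation convention for the exterior angles, especially since later arguments in the paper concern nonconvex tiles. I would therefore emphasize that the formula above is valid for any simple (embedded) polygon, convex or not, provided the $\varepsilon_i$ are taken as \emph{signed} exterior angles, so that a reflex vertex with interior angle $\theta_i > \pi$ contributes a negative $\varepsilon_i = \pi - \theta_i$. With this convention the derivation is unchanged and the area formula holds verbatim for every hyperbolic $n$-gon, which is exactly the generality the subsequent results require.
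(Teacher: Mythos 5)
Your derivation is correct and is exactly the argument the paper intends: the proposition is stated as an immediate consequence of Gauss--Bonnet with no written proof, and your computation (vanishing geodesic curvature on the edges, $K=-1$, $\chi=1$, signed exterior angles $\varepsilon_i = \pi - \theta_i$) is the standard way to fill it in. Your remark that the signed-angle convention makes the formula valid for nonconvex polygons is a worthwhile addition, since the paper later relies on the formula for tiles with reflex angles.
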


\begin{proposition}[Law of Cosines]
\label{LoC}
If $\ell$ is the length of the side opposing angle $\theta_3$ in a triangle with interior angles $\theta_i$, then 
\[\cos{\theta_3} =\sin{\theta_1}\sin{\theta_2}\cosh{\ell}-\cos{\theta_1}\cos{\theta_2}.
\]
In particular, for right triangle $\triangle ABC$ with legs $a,b$, 
\[
\cosh{(a)}=\cos{(\angle A)}/\sin{(\angle B)}.
\]
\end{proposition}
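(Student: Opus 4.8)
The plan is to work in the hyperboloid model, realizing $\H^2$ as $\{x\in\R^{2,1} : \langle x,x\rangle = -1,\ x_0>0\}$ with the Lorentzian form $\langle x,y\rangle = -x_0y_0+x_1y_1+x_2y_2$. Two facts drive the argument: for unit timelike vectors $P,Q\in\H^2$ at hyperbolic distance $d$ one has $\cosh d = -\langle P,Q\rangle$, and every geodesic is the intersection of $\H^2$ with a $2$-plane through the origin, cut out by a unit spacelike normal $n$ with $\langle n,n\rangle = 1$, unique up to sign. I would represent the triangle by its vertices $P_1,P_2,P_3$ (so the side opposite $P_3$ has length $\ell$ and the angle at $P_3$ is $\theta_3$), and let $n_i$ be the unit spacelike normal to the plane of the side opposite $P_i$, oriented so that $\langle n_i,P_i\rangle>0$. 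Then $\langle n_i,P_j\rangle = 0$ for $j\neq i$, while the angle at the vertex where the sides opposite $P_i$ and $P_j$ meet is recorded by $\langle n_i,n_j\rangle = -\cos\theta_k$, where $k$ is the third index.

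First I would assemble the two Gram matrices. Writing $M=[\,P_1\ P_2\ P_3\,]$, $N=[\,n_1\ n_2\ n_3\,]$, and $J=\operatorname{diag}(-1,1,1)$, the side-Gram matrix $G=M^{\mathsf T}JM$ has off-diagonal entries $-\cosh(\text{side})$ and diagonal $-1$, whereas the angle-Gram matrix $G^\ast=N^{\mathsf T}JN$ has off-diagonal entries $-\cos\theta_k$ and diagonal $1$. The crux is that these are dual. Because $\langle n_i,P_j\rangle=0$ for $i\neq j$, we have $N^{\mathsf T}JM = D$ with $D=\operatorname{diag}(d_1,d_2,d_3)$, $d_i=\langle n_i,P_i\rangle>0$; solving for $N$ and using $J^2=I$ gives $N=J(M^{\mathsf T})^{-1}D$, whence a short computation yields the clean relation $G^\ast = D\,G^{-1}D$, and therefore $G=D(G^\ast)^{-1}D$.

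Reading off the $(1,2)$ entry then proves the theorem. The cofactors of $G^\ast$ are immediate: the $(1,2)$ cofactor is $\cos\theta_3+\cos\theta_1\cos\theta_2$, and the diagonal cofactors are $\sin^2\theta_1$ and $\sin^2\theta_2$. Since the spacelike $n_i$ span a space of signature $(2,1)$ one has $\det G^\ast<0$, which fixes $d_i^2=-\det G^\ast/\sin^2\theta_i$ and makes $G_{12}=d_1d_2(G^\ast)^{-1}_{12}$ collapse, after substituting $G_{12}=-\cosh\ell$, to
\[
\cosh\ell \;=\; \frac{\cos\theta_3+\cos\theta_1\cos\theta_2}{\sin\theta_1\sin\theta_2},
\]
which rearranges at once to $\cos\theta_3=\sin\theta_1\sin\theta_2\cosh\ell-\cos\theta_1\cos\theta_2$. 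The right-triangle corollary is then instantaneous: to isolate a leg $a$ I apply the identity with $\theta_3=\angle A$ (the angle opposite $a$) and the remaining angles $\angle B$ and the right angle $\angle C=\pi/2$; since $\cos(\pi/2)=0$ and $\sin(\pi/2)=1$ it degenerates to $\cos(\angle A)=\sin(\angle B)\cosh a$, i.e.\ $\cosh a = \cos(\angle A)/\sin(\angle B)$.

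The main obstacle is bookkeeping rather than depth. I must pin down the orientation of each $n_i$ so that simultaneously $\langle n_i,P_i\rangle>0$ and $\langle n_i,n_j\rangle=-\cos\theta_k$; a wrong sign here flips the numerator and breaks the identity. I must also verify the normalization $G^\ast=DG^{-1}D$ precisely, i.e.\ that the diagonal rescaling is exactly by the $d_i$, and confirm the signs of $\det G^\ast$ and of $d_1d_2$ so that the square roots and divisions are legitimate; nondegeneracy of the triangle (non-collinear vertices) makes $M$ invertible and all the relevant determinants nonzero, which underwrites every step. As a fallback one can bypass the model by first establishing the ordinary law of cosines $\cosh\ell=\cosh a\cosh b-\sinh a\sinh b\cos\theta_3$ and the law of sines by a direct coordinate computation, then eliminating the side lengths algebraically; this is elementary but noticeably more tedious, so I would prefer the Gram-matrix duality above.
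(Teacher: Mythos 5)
The paper states this proposition without any proof at all: it is quoted as a standard identity of hyperbolic trigonometry (the dual, or angle--angle--angle, law of cosines), and the only thing the paper does with it is the specialization to right triangles used in \cref{prop:perimeter-n-theta}. So there is no argument of the paper's to compare yours against; what you have supplied is a genuine proof where the paper relies on a textbook fact. Your hyperboloid-model argument is correct. The key identities all check out: with inward-pointing unit normals ($\langle n_i,P_i\rangle>0$ forces $n_i$ to lie in the tangent plane at the opposite vertex's intersection point and to point into the triangle, whence $\langle n_i,n_j\rangle=-\cos\theta_k$); the duality $G^\ast=DG^{-1}D$ follows exactly as you say from $N^{\mathsf T}JM=D$ and $J^2=I$; $\det G^\ast=-(\det N)^2<0$ since $N$ is invertible for a nondegenerate triangle; the cofactors of $G^\ast$ are $\sin^2\theta_1$, $\sin^2\theta_2$, and $\cos\theta_3+\cos\theta_1\cos\theta_2$ as claimed, giving $d_1d_2=-\det G^\ast/(\sin\theta_1\sin\theta_2)$ and hence $-\cosh\ell=G_{12}=-(\cos\theta_3+\cos\theta_1\cos\theta_2)/(\sin\theta_1\sin\theta_2)$. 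The right-triangle specialization with $\theta_3=\angle A$, $\theta_2=\angle C=\pi/2$ is exactly the form the paper uses. The only caveat is that the proposition is more than the paper needs, and for the purposes of this paper a citation to a standard reference would have sufficed; but as a self-contained derivation your Gram-matrix route is clean, and the sign bookkeeping you flag as the main risk is handled correctly.
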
 

\begin{proposition}
\label{prop:perimeter-n-theta}
A regular $n$-gon with interior angle $\theta$ has perimeter given by
\begin{equation} \label{eq:P-n-theta}
P(n,\theta)=2n\cosh^{-1}\left(\frac{\cos(\pi/n)}{\sin(\theta/2)}\right).
\end{equation}
\end{proposition}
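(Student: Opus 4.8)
The plan is to dissect the regular $n$-gon into $2n$ congruent right triangles and read off the side length from the right-triangle case of \cref{LoC}. Let $O$ denote the center of the polygon, let $A$ be one of its vertices, and let $M$ be the midpoint of an edge incident to $A$. Joining $O$ to every vertex and to every edge midpoint partitions the polygon into $2n$ copies of the right triangle $\triangle OMA$, so the perimeter equals $2n$ times the leg $MA$, which is half of one edge.

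Next I would record the three angles of $\triangle OMA$. Since $OM$ meets the edge perpendicularly, $\angle OMA = \pi/2$. The $n$ segments from $O$ to the vertices split the full angle $2\pi$ at $O$ into equal sectors, and $OM$ bisects one of them, so $\angle MOA = \pi/n$. Finally, $OA$ bisects the interior angle $\theta$ at $A$, giving $\angle OAM = \theta/2$.

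Now I would apply \cref{LoC} with the right angle at $M$. Taking the leg $MA$, whose opposite angle is the central angle $\pi/n$ and whose companion leg $OM$ is opposite $\theta/2$, the right-triangle identity gives
\[
\cosh(MA)=\frac{\cos(\pi/n)}{\sin(\theta/2)}.
\]
Multiplying this half-edge length by $2n$ then yields \eqref{eq:P-n-theta}.

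The only point requiring care is bookkeeping in the application of \cref{LoC}: the identity expresses $\cosh$ of one leg using the angle opposite that leg over the sine of the angle opposite the \emph{other} leg, so I must use the leg $MA$ rather than the apothem $OM$; the latter would instead produce $\cos(\theta/2)/\sin(\pi/n)$. There is no genuine obstacle here — the argument is an elementary trigonometric dissection, and the decomposition into $2n$ congruent triangles is immediate from the symmetry of the regular polygon.
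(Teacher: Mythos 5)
Your proposal is correct and is essentially identical to the paper's proof: both dissect the regular $n$-gon into $2n$ congruent right triangles with angles $\pi/2$, $\pi/n$, $\theta/2$ and apply the right-triangle case of \cref{LoC} to the half-edge leg. Your explicit bookkeeping about which leg the identity applies to is a welcome clarification but not a different argument.
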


\begin{proof}
Connect the center of the regular $n$-gon to each of its vertices to form $n$ isosceles triangles. 
Bisect each triangle into two right triangles by connecting the center of the polygon to the midpoint of each side of the polygon.
Each triangle has interior angles $\pi/2, \pi/n$, and $\theta/2$. By \cref{LoC}, the length of the leg on the polygonal side of each of the $2n$ right triangles is $\cosh^{-1}(\cos(\pi/n)/\sin(\theta/2))$.
\end{proof}

\begin{definition} \label{def:AkPk}
For real $k>6$, let $A_k=A(k,2\pi/3)=(k-6)\pi/3$ and $P_k = P(k,2\pi/3)$, extending the area and perimeter of the regular $k$-gon $\poly{R}_k$ with angles $2\pi/3$ to real values of $k$.
Note that $A_k$ and $P_k$ increase from $0$ to $\infty$ as $k$ increases from $6$ to $\infty$.
\end{definition}

The hyperbolic version of Heron's formula gives the areas of
hyperbolic triangles in terms of their side lengths. 

\begin{proposition}[Heron's Formula, \cite{glasgow}] 
\label{Heron}
For a triangle in $\H^2$ with sides $x,y,z$, the area $A$ satisfies
\[
\tan^2 \frac{A}{2} = \frac{1 - \cosh^2 x - \cosh^2 y - \cosh^2 z + 2\cosh x \cosh y \cosh z}{(1 + \cosh x + \cosh y + \cosh z)^2}.
\]
\end{proposition}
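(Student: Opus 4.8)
The plan is to combine the Gauss--Bonnet area formula with the law of cosines and reduce the claim to a polynomial identity in $X=\cosh x$, $Y=\cosh y$, $Z=\cosh z$. Let $\alpha,\beta,\gamma$ be the angles opposite $x,y,z$ and write $\phi=\alpha+\beta+\gamma$. By \cref{prop:Gauss-Bonnet} the area is $A=\pi-\phi$, so $A/2=\pi/2-\phi/2$ and the half-angle identity gives
\[
\tan^2\frac{A}{2}=\cot^2\frac{\phi}{2}=\frac{1+\cos\phi}{1-\cos\phi}.
\]
Thus it suffices to express $\cos\phi$ in terms of the sides.

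Next I would invoke the side form of the law of cosines (the companion to \cref{LoC}), $\cosh x=\cosh y\cosh z-\sinh y\sinh z\cos\alpha$, to obtain $\cos\alpha=(YZ-X)/\sqrt{(Y^2-1)(Z^2-1)}$ and its cyclic analogues. The key observation is that the three sines share a common numerator: a short computation gives
\[
\sin^2\alpha=\frac{D}{(Y^2-1)(Z^2-1)},\qquad D:=1-X^2-Y^2-Z^2+2XYZ,
\]
and likewise for $\beta,\gamma$, so that $\sin\alpha=\sqrt{D}/(\sinh y\sinh z)$ cyclically. Note that $D$ is exactly the numerator of the target formula (and, pleasantly, a symmetric $3\times 3$ determinant), which is what makes the final simplification work.

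Substituting these into the expansion
\[
\cos\phi=\cos\alpha\cos\beta\cos\gamma-\cos\alpha\sin\beta\sin\gamma-\sin\alpha\cos\beta\sin\gamma-\sin\alpha\sin\beta\cos\gamma
\]
clears to a single fraction with denominator $P:=(X^2-1)(Y^2-1)(Z^2-1)$ and numerator $N:=uvw-D(u+v+w)$, where $u=YZ-X$, $v=XZ-Y$, $w=XY-Z$. Feeding this back into the half-angle expression yields
\[
\tan^2\frac{A}{2}=\frac{P+N}{P-N}.
\]
The claim then reduces to the two symmetric polynomial identities
\[
P+N=(X-1)(Y-1)(Z-1)\,D,\qquad P-N=(X-1)(Y-1)(Z-1)(1+X+Y+Z)^2,
\]
whose ratio is precisely $D/(1+X+Y+Z)^2$.

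The hard part will be the last step: verifying these two identities. Each is a symmetric polynomial identity in $X,Y,Z$ that I would confirm either by direct expansion or by checking that the common factor $(X-1)(Y-1)(Z-1)$ divides both $P\pm N$ and identifying the cofactors $D$ and $(1+X+Y+Z)^2$ (a degree count already shows the top-order terms of $P$ and $N$ cancel in $P-N$, consistent with the lower-degree cofactor). This bookkeeping is the only genuinely laborious part; everything upstream is a mechanical substitution of \cref{prop:Gauss-Bonnet} and \cref{LoC}.
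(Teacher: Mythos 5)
Your proof is correct, and it is a genuine addition: the paper offers no proof of \cref{Heron} at all, simply citing the Glasgow webpage, so there is no ``paper approach'' to compare against beyond the citation. Your route --- writing $\tan^{2}(A/2)=\cot^{2}(\phi/2)=(1+\cos\phi)/(1-\cos\phi)$ via \cref{prop:Gauss-Bonnet}, extracting $\cos\alpha=(YZ-X)/\sqrt{(Y^{2}-1)(Z^{2}-1)}$ and $\sin\alpha=\sqrt{D}/(\sinh y\sinh z)$ from the side form of the law of cosines, and reducing to the two factorizations of $P\pm N$ --- goes through. I checked the two claimed identities in terms of the elementary symmetric functions $e_{1},e_{2},e_{3}$ of $X,Y,Z$: with $uvw=e_{3}^{2}-e_{3}(e_{1}^{2}-2e_{2})+e_{2}^{2}-2e_{1}e_{3}-e_{3}$, $u+v+w=e_{2}-e_{1}$, $D=1-e_{1}^{2}+2e_{2}+2e_{3}$, and $P=e_{3}^{2}-e_{2}^{2}+2e_{1}e_{3}+e_{1}^{2}-2e_{2}-1$, both
\[
P+N=(e_{3}-e_{2}+e_{1}-1)\,D,\qquad P-N=(e_{3}-e_{2}+e_{1}-1)(1+e_{1})^{2}
\]
expand correctly, and $e_{3}-e_{2}+e_{1}-1=(X-1)(Y-1)(Z-1)$ is nonzero for a nondegenerate triangle, so the cancellation is legitimate. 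Two small points worth making explicit: taking $\sin\alpha=+\sqrt{D}/(\sinh y\sinh z)$ uses that the angles lie in $(0,\pi)$ (which also shows $D>0$), and the step $\tan(A/2)=\cot(\phi/2)$ with both sides positive uses $0<\phi<\pi$, i.e., positive area. With those remarks your argument is complete and self-contained, which is more than the paper provides.
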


\textcite{carroll06} provide the following simple proof that among hyperbolic $k$-gons of given area, the regular one minimizes perimeter. The previously published proof by \textcite{bezdek} used without proof the nontrivial fact (\cref{isostribest}) that for given base and area, an isosceles triangle minimizes perimeter. \citeauthor{carroll06} (Prop. 2.5) deduced this fact from Heron's formula, though their statement of Heron's formula was not quite right. In 2016, in discussions with Steve Openshaw, Colin Adams---unaware of the \citeauthor{carroll06} proof---produced a longer geometric proof (private communication).

\begin{lemma}
\label{isostribest}
For fixed base and perimeter, the isosceles triangle uniquely maximizes area in $\H^2$.
\end{lemma}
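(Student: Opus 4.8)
The plan is to deduce the lemma from Heron's formula (\cref{Heron}) and reduce everything to a one-variable calculus problem. Fix the base to be the side of length $z$ and fix the perimeter; since the perimeter determines the sum $x+y=:s$ of the two remaining sides, I am optimizing over pairs $x,y>0$ with $x+y=s$ and $z$ held constant. A hyperbolic triangle has area $A\in(0,\pi)$, so $A/2\in(0,\pi/2)$ and $\tan^2(A/2)$ is a strictly increasing function of $A$ on this range. Consequently, maximizing $A$ is equivalent to maximizing the right-hand side of Heron's formula, and it suffices to show that this rational expression in $\cosh x,\cosh y,\cosh z$ is \emph{uniquely} maximized when $x=y$.

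The key device is to reparametrize by the single quantity $u:=\cosh\tfrac{x-y}{2}\ge 1$, which measures the deviation from the isosceles configuration ($u=1$ precisely when $x=y$). Writing $C:=\cosh(s/2)$ and $c:=\cosh z$, both constant under the constraints, the product-to-sum identities give $\cosh x+\cosh y=2Cu$ and $\cosh x\cosh y=C^2+u^2-1$. First I would substitute these into Heron's formula, using $\cosh^2 x+\cosh^2 y=(\cosh x+\cosh y)^2-2\cosh x\cosh y$ to eliminate the squares. The hard part will be pushing this algebra through cleanly: everything hinges on the numerator collapsing to the factored form
\[
(\cosh s-\cosh z)\,(1+\cosh z-2u^2),
\]
with the denominator becoming $(1+\cosh z+2Cu)^2$. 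This factoring is the only delicate step, and once it is in hand the remainder is transparent.

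Set $K:=\cosh s-\cosh z$, which is positive by the triangle inequality $z<s$, and $B:=1+\cosh z>0$; both are constants. The target quantity is then the rational function $g(u)=K(B-2u^2)/(B+2Cu)^2$. A direct differentiation, after cancelling one factor of $(B+2Cu)$, gives
\[
g'(u)=\frac{-4KB\,(u+C)}{(B+2Cu)^3},
\]
which is strictly negative for every $u\ge 1$, since $K,B,C>0$ and $B+2Cu>0$. Hence $g$ is strictly decreasing on $[1,\infty)$, so over the admissible range the area is maximized exactly at $u=1$, i.e.\ when $x=y$. This proves that for fixed base and perimeter the isosceles triangle uniquely maximizes area in $\H^2$.
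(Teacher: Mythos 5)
Your proof is correct; I verified both the key factorization and the derivative. Writing $p=\cosh x+\cosh y=2Cu$ and $q=\cosh x\cosh y=C^2+u^2-1$, the numerator of Heron's formula becomes $1-c^2-p^2+2q(1+c)=(2C^2-1-c)(1+c-2u^2)=K(B-2u^2)$ since $2C^2-1=\cosh s$, and the quotient rule indeed gives $g'(u)=-4KB(u+C)/(B+2Cu)^3<0$ on $[1,\infty)$. Your route shares its starting point with the paper's proof of \cref{isostribest} --- both fix the base and the sum $x+y$ and reduce Heron's formula to a one-variable problem --- but the optimization step is genuinely different. The paper treats the numerator and denominator separately: it maximizes the numerator by computing the critical equation $F'(x)=4(\cosh(2c)-m)\sinh(c-x)\cosh(c-x)=0$, minimizes the denominator at $x=y$ (convexity of $\cosh$), and relies on the fortunate fact that both extrema occur at the same point. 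Your substitution $u=\cosh\tfrac{x-y}{2}$ packages the whole ratio into a single strictly decreasing function of the deviation from the isosceles configuration, which removes the need for that simultaneous-optimization coincidence and makes uniqueness immediate from strict monotonicity; the price is the algebraic collapse of the numerator, which you correctly flag as the delicate step and which does go through. Either argument fully establishes the lemma.
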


\begin{proof}
Consider a triangle with side lengths $x,y,z$. By \cref{Heron}, 
\[
\tan^2 \frac{A}{2} = \frac{1 - \cosh^2 x - \cosh^2 y - \cosh^2 z + 2\cosh x \cosh y \cosh z}{(1 + \cosh x + \cosh y + \cosh z)^2},
\]
where $A$ is area. Fixing the base $z$,
\begin{equation}
\label{eq:maxme}
\tan^2 \frac{A}{2} = \frac{a - \cosh^2 x - \cosh^2 y + 2 m \cosh x \cosh y}{(b + \cosh x + \cosh y)^2}
\end{equation}
for constants $a,b$, and $m = \cosh(z)$. Fix $x+y = 2c$, thereby fixing perimeter. It is possible to simultaneously maximize the numerator and minimize the denominator (which are both positive). The numerator is maximized by maximizing 
\[
F(x) = 2m\cosh x\cosh(2c-x) - \cosh^2x-\cosh^2(2c-x).
\]
A short computation and simplification makes the critical equation
\[
F'(x) = 4(\cosh(2c) - m)\sinh(c-x)\cosh(c-x) = 0.
\]
Observe $0<\cosh(c-x)$. Also, by the triangle inequality, $z<2c$ so $m < \cosh(2c)$. Thus $\sinh(c-x)=0$, which means $x=c$. This critical point is the unique global maximum as the derivative is positive for $0<x<c$ and negative for $c<x <2c.$

To minimize the denominator of \cref{eq:maxme}, set $x = c = y$. Therefore \cref{eq:maxme}, and thus area, is uniquely maximized for $x = y$, that is, when the triangle is isosceles.
\end{proof}

\begin{proposition}[\cite{carroll06}, Prop.\ 2.5]\label{prop:reg-is-best}
In the hyperbolic plane, the regular $n$-gon $\poly{Q}_n$ has less perimeter
than any other $n$-gon $\poly{Q}$ of the same area.
\end{proposition}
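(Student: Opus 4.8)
The plan is to prove this by a symmetrization argument, reducing an arbitrary $n$-gon to the regular one through a sequence of area-preserving perimeter-decreasing moves, using \cref{isostribest} as the engine.

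The plan is to prove the dual statement—among $n$-gons of fixed \emph{perimeter}, the regular one uniquely \emph{maximizes} area—and then transfer back. Working at fixed perimeter rather than fixed area sidesteps the absence of similarities in $\H^2$: the symmetrization moves below preserve perimeter exactly while strictly increasing area, with no rescaling available. First I would fix a perimeter $p$ and invoke compactness; since \cref{prop:Gauss-Bonnet} bounds the area of any $n$-gon by $(n-2)\pi$, a perimeter-$p$ maximizer $\poly{Q}^\ast$ exists (a separate check rules out degenerate limits). Next I would establish convexity: if the interior angle at a vertex $v_i$ exceeds $\pi$, reflecting $v_i$ across the geodesic through its neighbors $v_{i-1}, v_{i+1}$ yields an $n$-gon of equal perimeter and strictly greater area, contradicting maximality.

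I would then show $\poly{Q}^\ast$ is equilateral. At each vertex $v_i$, holding the other vertices fixed, let $v_i$ range over the locus on which $|v_{i-1}v_i| + |v_iv_{i+1}|$ is constant, which keeps the total perimeter fixed. Then the base $v_{i-1}v_{i+1}$ and the perimeter of triangle $v_{i-1}v_iv_{i+1}$ are both fixed, so \cref{isostribest} forces the area of that triangle—hence of the whole polygon—to be uniquely maximized when $|v_{i-1}v_i| = |v_iv_{i+1}|$. Applying this at every vertex makes all sides equal.

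The main obstacle is the final step, equiangularity. Here I would exploit the first-order (Lagrange) condition at the interior maximizer: at each vertex the gradient of area is parallel to the gradient of perimeter. The perimeter gradient at $v_i$ points along the interior bisector with magnitude $2\cos(\theta_i/2)$, while the area gradient is perpendicular to the diagonal $v_{i-1}v_{i+1}$ with magnitude that a short first-variation computation shows to be a strictly increasing function of the diagonal length $d_i$. Since $\poly{Q}^\ast$ is already equilateral with common side $s$, the hyperbolic law of cosines (\cref{LoC}) gives $\cosh d_i = \cosh^2 s - \sinh^2 s\,\cos\theta_i$, strictly increasing in $\theta_i$. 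The Lagrange magnitude equation thus equates a strictly increasing function of $\theta_i$ to the strictly decreasing $2\lambda\cos(\theta_i/2)$, so it has a unique root; every interior angle takes that same value and $\poly{Q}^\ast$ is regular. (Alternatively one argues that an area-maximizing equilateral polygon is inscribed in a circle and that an equilateral cyclic polygon is regular, but verifying the cyclic property in $\H^2$ is itself delicate, which is why I prefer the variational route.)

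Finally I would transfer to the stated form. Given any $n$-gon $\poly{Q}$ of area $\alpha$, let $\poly{R}'$ be the regular $n$-gon of perimeter $P(\poly{Q})$; the dual result gives $\operatorname{area}(\poly{R}') \geq \alpha$, with equality only when $\poly{Q}=\poly{R}'$ is regular. Because the area of a regular $n$-gon strictly increases with its perimeter (\cref{eq:A-n-theta,eq:P-n-theta}), the regular $n$-gon $\poly{Q}_n$ of area $\alpha$ satisfies $P(\poly{Q}_n) \le P(\poly{R}') = P(\poly{Q})$, again with equality only when $\poly{Q}$ is regular. This is exactly the asserted strict inequality $P(\poly{Q}_n) < P(\poly{Q})$ for every non-regular $n$-gon $\poly{Q}$ of area $\alpha$.
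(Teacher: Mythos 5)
Your overall architecture coincides with the paper's up through the equilateral step: both pass to the dual fixed-perimeter problem, get existence by compactness, reduce to the convex case, and apply \cref{isostribest} at consecutive vertex triples (varying $v_i$ with $|v_{i-1}v_i|+|v_iv_{i+1}|$ held fixed) to force all sides equal. Two small differences there: the paper secures convexity by maximizing over convex hulls of $n$ points rather than by your reflection move, which can produce a self-overlapping competitor when the reflected vertex collides with a distant part of the boundary; and the paper must also dispose of ``extra'' vertices with angle $\pi$ (needed when the hull has fewer than $n$ corners), which it does by a second application of \cref{isostribest}.

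The genuine divergence---and the gap---is the equiangularity step. The paper finishes in two sentences: inscribe the regular $n$-gon with the same edge lengths in a circle, glue the $n$ circular segments onto the edges of the equilateral maximizer $\poly{Q}$, and obtain a non-circular region with the circle's perimeter and at least its area, contradicting the isoperimetric inequality for the hyperbolic disk. No cyclicity of $\poly{Q}$ ever has to be verified, so the ``delicacy'' you cite as a reason to avoid this route is not actually there. Your Lagrange-multiplier alternative rests on two assertions left unproved: that the gradient of area with respect to $v_i$ is perpendicular to the diagonal $v_{i-1}v_{i+1}$, and that its magnitude is a strictly increasing function of $d_i$. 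The first is false for a general apex position in $\H^2$: the level curves of area over a fixed base are not equidistant curves of the base geodesic (by the hyperbolic Lexell theorem they are hypercycles whose axis is the midline of the two legs), so the area gradient is not generally perpendicular to the base; it is rescued only by the reflection symmetry of the isosceles configuration, which you would need to invoke explicitly. The second assertion is the entire content of the step---it is exactly what makes the Lagrange equation have a unique root in $\theta_i$---and ``a short first-variation computation'' is not a proof; in hyperbolic geometry that computation runs through the variation of all three angles via Gauss--Bonnet and its monotonicity is not obvious. As written the equiangularity step is a plausible program rather than an argument; either carry out and verify that computation, or replace the step with the paper's circle comparison.
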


\begin{proof}
First we show that the optimal $n$-gon $\poly{Q}$ must be convex and equilateral. 
For fixed perimeter $P$, an area-maximizing $\poly{Q}$, as the convex hull of $n$ points, exists by a standard compactness argument. If it has fewer than $n$ vertices, place extra vertices on one of the sides.

By \cref{isostribest}, any two adjacent sides must be of equal length, ignoring the extra vertices. Now add one of the extra vertices. Repeating the argument with a segment bounded by the vertex and the following adjacent edge shows that there are no extra vertices. Therefore $\poly{Q}$ is a convex equilateral $n$-gon. 

Finally, assume $\poly{Q}$ is not regular. Inscribe the regular $n$-gon $\poly{Q}_n$ with the same edge lengths in a circle. Adding the little region between each edge of $\poly{Q}_n$ and the circle to each edge of $\poly{Q}$ would yield another region with the same perimeter as the circle and at least as much area, a contradiction. 
\end{proof}


The following monotonicity result is generalized to noninteger $n$ in \cref{lem:area-n}.

\begin{proposition} \label{prop:reg-decreasing}
The perimeter of a regular $n$-gon for a fixed area is decreasing as a 
function of $n$.
\end{proposition}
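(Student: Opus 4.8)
The plan is to give a short geometric argument built on \cref{prop:reg-is-best}, rather than to differentiate \eqref{eq:P-n-theta} directly. Fix an area $A>0$ and let $\poly{Q}_n$ denote the regular $n$-gon of area $A$, with perimeter $P(n)$. It suffices to show $P(n+1)<P(n)$ for every integer $n$ for which $\poly{Q}_n$ exists.

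First I would realize $\poly{Q}_n$ as a \emph{non-regular} $(n+1)$-gon: insert a new vertex at the midpoint of one of its edges, assigning that vertex interior angle $\pi$. This subdivision changes neither the area nor the perimeter, so we obtain an $(n+1)$-gon of area $A$ and perimeter $P(n)$ which is manifestly not the regular $(n+1)$-gon (neither its angles nor its edge lengths are all equal). Then I would apply \cref{prop:reg-is-best} within the class of $(n+1)$-gons of area $A$: the regular $(n+1)$-gon, whose perimeter is $P(n+1)$ by definition, has \emph{strictly} less perimeter than this competitor. Hence $P(n+1)<P(n)$, so $P(n)$ is strictly decreasing.

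The one point needing care, and the only place where I expect any friction, is that the competitor $(n+1)$-gon has a vertex of angle exactly $\pi$, so I should confirm that \cref{prop:reg-is-best} genuinely applies to such a degenerate polygon. This is not a real obstacle: the proof of \cref{prop:reg-is-best} itself permits ``extra vertices'' placed on the sides of a polygon (that is, collinear points among the $n+1$ points of a convex hull) and shows the unique perimeter-minimizer at fixed area is the regular $(n+1)$-gon. Our subdivided $\poly{Q}_n$ is exactly such a configuration and is not regular, so the inequality is strict. If one prefers to avoid degenerate vertices altogether, one can instead displace the inserted midpoint infinitesimally outward to get a genuine convex $(n+1)$-gon and pass to the limit, and the strict inequality survives.

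As a remark, the same result can be reached by calculus: eliminating $\theta$ from \eqref{eq:A-n-theta} at fixed $A$ gives $\sin(\theta/2)=\cos\!\big((2\pi+A)/(2n)\big)$, so by \eqref{eq:P-n-theta} the perimeter becomes $P(n)=2n\cosh^{-1}\!\big(\cos(\pi/n)/\cos((2\pi+A)/(2n))\big)$, and one would show $dP/dn<0$. The hard part of that route is pinning down the sign of the derivative, which reduces to a convexity statement for $\cosh^{-1}$ of the relevant ratio and leads into the messier analysis that \cref{lem:area-n} carries out for all real $n$. Since only integer $n$ is needed here, the geometric argument is cleaner, and I would use it.
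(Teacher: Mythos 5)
Your argument is exactly the paper's proof: the paper also forms an $(n+1)$-gon $\poly{Q}'_{n+1}$ by adding a vertex of measure $\pi$ to the regular $n$-gon and then invokes \cref{prop:reg-is-best} to get $P(\poly{Q}_{n+1}) < P(\poly{Q}'_{n+1}) = P(\poly{Q}_n)$. Your extra care about whether \cref{prop:reg-is-best} applies to a polygon with a straight angle is a reasonable observation, and your resolution matches how the paper's proof of that proposition already handles ``extra vertices'' on sides.
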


\begin{proof}
Let $\poly{Q}_n$ and $\poly{Q}_{n+1}$ be the regular polygons of a fixed
area with $n$ and $n+1$ sides. Let $\poly{Q}'_{n+1}$ be an $(n+1)$-gon
formed by adding a vertex of measure $\pi$ to $\poly{Q}_n$.
By \cref{prop:reg-is-best},
\[  
P(\poly{Q}_{n+1}) < P(\poly{Q}'_{n+1}) = P(\poly{Q}_n). \qedhere 
\]
\end{proof}

\begin{remark}
As expected, the perimeter of a regular $n$-gon of area $A$ is increasing as a function of $A$, for $0 < A < (n-2)\pi.$ Indeed, by \cref{prop:Gauss-Bonnet} and \cref{prop:perimeter-n-theta}, the perimeter of the $n$-gon is
\[
2n\cosh^{-1}\left(\frac{\cos\pi/n}{\sin(((n-2)\pi-A)/2n)}\right),
\] and it is increasing because $\cosh^{-1}$ and sine are increasing over $(0,\infty)$ and $(0, \pi/2)$, respectively. 
\end{remark}
\begin{corollary}
\label{prop:3-k}
The regular $k$-gon has less perimeter than 
any other $n$-gon of equal or greater area for $3 \le n \le k.$
\end{corollary}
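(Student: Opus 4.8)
The plan is to reduce an arbitrary competitor to the regular $k$-gon through a short chain of perimeter-nonincreasing steps, each supplied by a result already in hand. Write $A$ for the area of the regular $k$-gon $\poly{R}_k$, and let $\poly{Q}$ be any $n$-gon with $3\le n\le k$ and area $A'\ge A$. The three tools are: \cref{prop:reg-is-best} (for a fixed number of sides and fixed area, the regular polygon uniquely minimizes perimeter), the remark following \cref{prop:reg-decreasing} (for a fixed number of sides, the perimeter of the regular polygon strictly increases with area), and \cref{prop:reg-decreasing} itself (for fixed area, the perimeter of the regular polygon strictly decreases in the number of sides).

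First I would replace $\poly{Q}$ by the regular $n$-gon $\tilde{\poly{Q}}_n$ of the same area $A'$, so that $P(\poly{Q})\ge P(\tilde{\poly{Q}}_n)$ by \cref{prop:reg-is-best}. Next, letting $\poly{R}_n$ be the regular $n$-gon of area $A$, the inequality $A'\ge A$ together with the cited remark gives $P(\tilde{\poly{Q}}_n)\ge P(\poly{R}_n)$. Finally, since $\poly{R}_n$ and $\poly{R}_k$ have the same area $A$ and $n\le k$, \cref{prop:reg-decreasing} yields $P(\poly{R}_n)\ge P(\poly{R}_k)$. Concatenating,
\[
P(\poly{Q})\ \ge\ P(\tilde{\poly{Q}}_n)\ \ge\ P(\poly{R}_n)\ \ge\ P(\poly{R}_k).
\]
Before running this I would check that every regular polygon invoked actually exists: by \cref{prop:Gauss-Bonnet} any genuine $n$-gon has area strictly less than $(n-2)\pi$, so $A\le A'<(n-2)\pi$ keeps both $A$ and $A'$ in the range where the regular $n$-gon is defined and where the monotonicity in area applies.

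The only real work is upgrading the ``$\ge$'' to the strict ``$<$'' asserted, and this is where I expect the care to be needed. Each of the three steps is an equality exactly when a specific degeneracy occurs: the first when $\poly{Q}$ is itself regular, the second when $A'=A$, and the third when $n=k$. Hence equality can hold throughout only if $\poly{Q}$ is a regular $n$-gon with $A'=A$ and $n=k$, that is, only if $\poly{Q}\sim\poly{R}_k$. Thus for any $\poly{Q}\not\sim\poly{R}_k$ at least one inequality is strict and $P(\poly{R}_k)<P(\poly{Q})$, which is the claim. The subtlety to get right is precisely this equality analysis: \cref{prop:reg-is-best} and \cref{prop:reg-decreasing} are both proved with strict inequalities for non-regular, respectively fewer-sided, competitors, so tracing those strict cases through the chain is routine but must be carried out to justify strictness rather than merely ``$\le$''.
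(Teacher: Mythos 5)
Your argument is correct and is exactly the chain the paper intends: its proof of this corollary is the one-line ``follows immediately from \cref{prop:reg-is-best,prop:reg-decreasing}'', and you have simply made explicit the three perimeter-nonincreasing reductions (regularize, shrink the area using the remark after \cref{prop:reg-decreasing}, then increase the number of sides) together with the equality analysis. No differences in substance.
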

\begin{proof}
The corollary follows immediately from \cref{prop:reg-is-best,prop:reg-decreasing}.
\end{proof}

\begin{corollary}
\label{cor:combination-3-7}
Tile a closed hyperbolic surface by polygons of equal area with $k$ or fewer sides. Then each of those tiles has perimeter at least that of the regular $k$-gon of the same area.
\end{corollary}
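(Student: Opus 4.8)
The plan is to reduce this to the single-polygon comparison already established in \cref{prop:3-k}, applied separately to each tile. The tiling hypothesis itself plays no essential structural role beyond supplying the tiles: the asserted conclusion is entirely local, a perimeter bound holding for each face on its own.

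First I would fix the common area $A$ of the tiles and consider an arbitrary tile, an $n$-gon with $n \le k$. If any boundary vertex has measure $\pi$, removing it leaves both perimeter and area unchanged while lowering the side count, so I may assume each tile is a genuine polygon with $3 \le n \le k$. Before invoking \cref{prop:3-k} I would check that the object of comparison---the regular $k$-gon of area $A$---actually exists. By \cref{prop:Gauss-Bonnet}, any $n$-gon has area strictly below $(n-2)\pi$ since its interior angles are positive; hence $A < (n-2)\pi \le (k-2)\pi$, which is exactly the range of areas realized by regular $k$-gons, so the comparison polygon exists.

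With existence secured, I would apply \cref{prop:3-k} in the equal-area case: for $3 \le n \le k$, the regular $k$-gon of area $A$ has perimeter no greater than that of the given $n$-gonal tile. Running this over every tile of the tiling immediately yields the claim that each tile has perimeter at least that of the regular $k$-gon of the same area.

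I do not expect a genuine obstacle, since all the analytic content already lives in \cref{prop:reg-is-best,prop:reg-decreasing}, packaged together as \cref{prop:3-k}. The only two points demanding any care are the bookkeeping matters noted above: confirming that the regular $k$-gon of the prescribed area exists (via the strict area bound $A<(k-2)\pi$), and normalizing away vertices of measure $\pi$ so that each tile legitimately presents at most $k$ genuine sides before \cref{prop:3-k} is invoked.
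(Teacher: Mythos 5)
Your proposal is correct and follows the paper's own (one-line) proof exactly: apply \cref{prop:3-k} to each tile individually. The extra bookkeeping you add---checking that the regular $k$-gon of area $A$ exists via $A<(k-2)\pi$, and normalizing away measure-$\pi$ vertices---is sound and harmless, though the paper treats these as implicit.
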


\begin{proof}
The corollary follows immediately from \cref{prop:3-k}.
\end{proof}






\section{Monohedral Tilings of Closed Hyperbolic Surfaces} 
\label{sect:monohedral}
In 2005, \textcite{cox2005, cox2011} and subsequently \textcite{sesum} proposed generalizing Hales's hexagonal isoperimetric inequality to prove that a
tiling by regular $k$-gons $\poly{R}_k$ with $120^\circ$ angles ($k \geq 7$) minimizes perimeter among all (possibly multihedral) tilings of an appropriate closed hyperbolic surface.
\textcite{carroll06} showed that their proposed polygonal isoperimetric 
inequality fails for $k > 66$. 
\cref{cor:monohedral-tilings} independently proves $R_k$ optimal for \emph{monohedral} tilings.
Although \cref{cor:monohedral-tilings} applies even if the regular polygon does not tile, \cref{thm:existence-of-tilings} shows there are many closed hyperbolic surfaces which it does tile. 
It is possible for many-sided polygons to tile, but \cref{cor:3} shows that
as $n$ increases, $n$-gonal tiles necessarily have many concave angles.
\cref{cor:heptagon-beats-convex} deduces that the regular polygon has less perimeter than any other \emph{convex} polygonal tile.

\begin{remark}
By Gauss-Bonnet, the regular $k$-gon $\poly{R}_k$ of area $A_k = (k-6)\pi/3$ ($k \geq 7$) has interior angles of $2\pi/3$ (\cref{sect:hyperbolic-geometry}). It therefore tiles $\H^2$, as well as many closed hyperbolic surfaces (\cref{thm:existence-of-tilings}). For area not a multiple of $\pi/3$, there is no conjectured isoperimetric tile. 

That every other tile of the same area has more perimeter than $\poly{R}_k$ was known in the special case that the surface has area $A_k$, so that a single tile covers the whole surface. \textcite[653]{choe1989} proved the existence of such an isoperimetric single tile and shows that it is a polygon with $120^{\circ}$ interior angles. For example, the isoperimetric single tile on a flat torus is a $120^{\circ}$-angle hexagon (not a parallelogram) and always has at least the perimeter of the regular hexagon. On a closed hyperbolic surface of genus $g$, the isoperimetric single tile $T$ is a $120^{\circ}$-angle $(12g-6)$-gon and always has at least the perimeter of the regular $(12g-6)$-gon.
\end{remark}

\begin{proposition}\label{thm:existence-of-tilings}
For $k\geq 7$, there exist infinitely many closed hyperbolic surfaces tiled by the regular $k$-gon $R_k$ of area $A_k = (k-6)\pi/3$ and angles $2\pi/3$.
\end{proposition}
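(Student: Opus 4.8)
The plan is to realize these surfaces as quotients of $\H^2$ by torsion-free Fuchsian groups. Since $\poly{R}_k$ has interior angles $2\pi/3$, exactly three copies fit edge-to-edge around each vertex ($3\cdot 2\pi/3 = 2\pi$), so $\poly{R}_k$ tiles the hyperbolic plane as the regular tessellation $\{k,3\}$. First I would record the symmetry group $\Gamma$ of this tessellation: barycentric subdivision of each $k$-gon exhibits $\{k,3\}$ as the orbit of a single Schwarz triangle with angles $\pi/2,\pi/3,\pi/k$ under the $(2,3,k)$ triangle group, which is a cocompact Fuchsian group. Every element of $\Gamma$ is an isometry carrying tiles of the tessellation to tiles.

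Next I would pass to a torsion-free finite-index subgroup $\Gamma' \le \Gamma$, which exists by \textcite{edmonds-tess, edmonds}. Being torsion-free and discrete, $\Gamma'$ acts freely and properly discontinuously, so $M = \H^2/\Gamma'$ is a closed hyperbolic surface and the projection $\pi \colon \H^2 \to M$ is a covering map by local isometries. Because $\Gamma' \le \Gamma$ carries tiles to tiles, the tessellation $\{k,3\}$ descends to $M$: each $k$-gon maps homeomorphically on its interior (a nontrivial $\gamma \in \Gamma'$ fixing a tile would lie in the finite stabilizer of that tile in $\Gamma$, forcing $\gamma = e$), so the images are geodesic $k$-gonal faces that are open topological disks, and the resulting tiling is edge-to-edge and monohedral with prototile $\poly{R}_k$.

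To obtain infinitely many such surfaces, I would choose subgroups of unbounded index. By Gauss--Bonnet, $\operatorname{area}(M) = [\Gamma : \Gamma']\cdot \operatorname{area}(\H^2/\Gamma)$, so letting the index tend to infinity yields surfaces of unbounded area, hence unbounded genus; these are pairwise non-homeomorphic and therefore infinite in number. Equivalently, each $M$ is tiled by $N = \operatorname{area}(M)/A_k$ copies of $\poly{R}_k$, and $N$ is unbounded.

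The hard part is the existence of $\Gamma'$ itself: it must be torsion-free, so that $M$ is a genuine surface rather than an orbifold and no tile is folded onto itself, and of finite index, so that $M$ is closed. This is exactly the content of the cited results on torsion-free subgroups of Fuchsian groups, which I would invoke rather than reprove. The remaining points---that the descended faces are embedded disks with geodesic edges and that all tiles are congruent to $\poly{R}_k$---are routine consequences of $\pi$ being a local isometry and $\Gamma'$ acting freely.
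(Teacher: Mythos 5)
Your proposal is correct and follows essentially the same route as the paper: both invoke \textcite{edmonds-tess} for torsion-free subgroups of arbitrarily large finite index in the $(2,3,k)$ triangle group and descend the $\{k,3\}$ tessellation (equivalently, the $2k$ Schwarz triangles per $k$-gon) to the quotient surfaces. Your write-up simply makes explicit the freeness of the action and the unboundedness of the area, which the paper leaves implicit.
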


\begin{proof}
These surfaces are provided by work of \textcite[Main Thm.]{edmonds-tess} on torsion-free subgroups of Fuchsian groups and tessellations (see also \cite{edmonds-bull, edmonds}). Their work yields torsion-free subgroups $S$ of arbitrarily large finite index of the triangle group $(2, 3, k).$ This triangle group is the orientation-preserving symmetry group of the hyperbolic triangle of angles $\pi/2, \pi/3,$ and $\pi/k$. Each quotient of $\H^2$ by such a subgroup $S$ is a closed hyperbolic surface tiled by these triangles, which can be joined in groups of $2k$ to form a tiling by the regular $k$-gon of area $(k-6)\pi/3$ and hence angles $2\pi/3$ (by Gauss-Bonnet).
\end{proof}

\begin{example}
The Klein Quartic Curve in $\C P^2$ \cite{klein-original} is the set of complex solutions to the homogeneous equation
\[
    u^3v + v^3w + w^3u = 0.
\]
The curve is a hyperbolic 3-holed torus. It is famously tiled by 24 regular heptagons.
\end{example}

The following results are instrumental in eliminating competing $n$-gons of large~$n$.
\begin{lemma} \label{lemma:vertexdegrees}
Consider a tiling of a closed hyperbolic surface by curvilinear polygons $\poly{Q}_i$ of average area $A_k=(k-6)\pi/3$ for some real $k>6$. Then each polygon has on average at most $k$ vertices of degree at least $3$, with equality if and only if every vertex has degree two or three.
\end{lemma}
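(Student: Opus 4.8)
The plan is to reduce everything to one linear identity connecting the vertex degrees of the tiling to its total area, obtained by feeding the global Gauss--Bonnet theorem into Euler's formula. Write $V$, $E$, $N$ for the numbers of vertices, edges, and tiles, and let $d_v$ be the degree of a vertex $v$; since the tiling has no vertices of degree $0$ or $1$, every $d_v \geq 2$. I would use three facts: total area $= N A_k = -2\pi\chi$ by Gauss--Bonnet on the closed surface of curvature $-1$; Euler's formula $\chi = V - E + N$; and the handshake lemma $\sum_v d_v = 2E$. Only the \emph{total} area enters, so the argument never sees the individual edges and applies unchanged to curvilinear tilings, where per-tile Gauss--Bonnet would otherwise incur geodesic-curvature corrections.

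First I would substitute $A_k = (k-6)\pi/3$ into $N A_k = -2\pi(V - E + N)$ and simplify to $E - V = Nk/6$. Combined with $\sum_v d_v = 2E$, this gives the key identity
\[
\sum_{v} (d_v - 2) \;=\; 2E - 2V \;=\; \frac{Nk}{3}.
\]
Because degree-$2$ vertices contribute $0$ on the left, only vertices of degree at least $3$ matter, so $\sum_{v:d_v\geq 3}(d_v - 2) = Nk/3$.

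Next I would identify the quantity to be bounded: the number of corners of the whole tiling located at a vertex $v$ is exactly $d_v$, so the total count of degree-$\geq 3$ corners, summed over all tiles, is $\sum_{v:d_v\geq 3} d_v$, and dividing by $N$ is the average number of degree-$\geq 3$ vertices per tile. The elementary inequality $d \leq 3(d-2)$ for integers $d \geq 3$ (with equality iff $d=3$) then yields
\[
\sum_{v:d_v\geq 3} d_v \;\leq\; 3\sum_{v:d_v\geq 3}(d_v-2) \;=\; Nk,
\]
so the average is at most $k$, and equality forces $d_v = 3$ at every high-degree vertex, i.e.\ every vertex has degree $2$ or $3$.

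The computations are routine; the hard part is purely the bookkeeping of the middle step --- verifying that degree-$2$ vertices drop out of $\sum_v(d_v-2)$ and, above all, that the average number of high-degree corners per tile really is $\tfrac{1}{N}\sum_{v:d_v\geq 3} d_v$ (each tile's corners counted with multiplicity should a tile meet one vertex more than once). Getting that identification right is what makes the final one-line inequality deliver exactly the stated bound and equality case.
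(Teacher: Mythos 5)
Your proof is correct and is essentially the paper's argument in a globally reorganized form: the paper distributes edges and vertices fractionally to each tile and bounds its contribution to the Euler characteristic by $1-v/6$, while you compute $E-V=Nk/6$ outright via the handshake lemma, but both rest on Gauss--Bonnet plus the same inequality ($1/d\le 1/3$ for $d\ge 3$, which is your $d\le 3(d-2)$), with the same equality case. Your explicit attention to counting corners with multiplicity at a repeated vertex is a nice touch that the paper leaves implicit.
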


\begin{proof}
A tile with $n$ edges and $v$ vertices of degree at least $3$ contributes to the tiling $1$ face, $n/2$ edges, and at most $(n-v)/2 + v/3$ vertices, with equality precisely if no vertices have degree greater than $3$. Therefore it adds at most $1-v/6$ to the Euler characteristic $F-E+V$. The Gauss-Bonnet theorem says that 
\[ 
    \int G=2\pi(F-E+V). 
\]
Hence the average contributions per tile satisfy
\[
-A_k = -(k-6)\pi/3 \leq 2\pi(1-\overline{v}/6).
\]
Therefore $\overline{v} \leq k$, with equality if and only if no vertices have degree more than~$3$.
\end{proof}

\begin{proposition}
\label{cor:3}
Let $\poly{Q}$ be an $n$-gon of arbitrary area $A_k=(k-6)\pi/3$ (real $k>6$) with $\ell_1$ (interior) 
angles of measure $\pi$ and $\ell_2$ of measure greater than $\pi$. 
If $\poly{Q}$ tiles a closed hyperbolic surface $M$, then $\ell_1 + 2\ell_2 \ge n-k$. 
Equality holds for a tiling (and therefore every tiling) if and only if every vertex is of degree two or three, and every concave angle has degree two.
\end{proposition}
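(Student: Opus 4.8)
The plan is to play \cref{lemma:vertexdegrees} against the angle structure of $\poly{Q}$: the lemma caps the average number of degree-$\ge 3$ corners per tile at $k$, and I want a matching lower bound forcing that average to be at least $n-\ell_1-2\ell_2$. Write $N$ for the number of tiles. Since the tiling is monohedral, it has exactly $Nn$ tile-corners, of which $N\ell_1$ are flat (angle $\pi$), $N\ell_2$ are reflex (angle $>\pi$), and $N(n-\ell_1-\ell_2)$ are convex (angle $<\pi$). Let $\overline{v}$ denote the average number of corners per tile sitting at vertices of degree at least $3$; \cref{lemma:vertexdegrees} gives $\overline{v}\le k$. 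The whole argument then reduces to controlling how many corners can instead sit at degree-$2$ vertices.

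The key observation is a classification of degree-$2$ vertices. Around any vertex of a tiling of a closed surface the incident angles sum to $2\pi$, and a degree-$2$ vertex has exactly two incident corners (two edges cut the neighborhood into two sectors), regardless of whether these belong to one tile or two. Since the two angles sum to $2\pi$, either both equal $\pi$ (two flat corners) or one is $<\pi$ and the other $>\pi$ (one convex, one reflex); no degree-$2$ vertex can pair two convex, two reflex, or a flat with a non-flat corner. In particular the convex and reflex corners occurring at degree-$2$ vertices are in bijection, so the number of corners at degree-$2$ vertices equals the number of flat corners there plus twice the number of reflex corners there. Bounding these by the respective totals $N\ell_1$ and $N\ell_2$ shows that at most $N(\ell_1+2\ell_2)$ corners lie at degree-$2$ vertices.

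Every remaining corner lies at a vertex of degree at least $3$, so the number of such corners is at least $Nn-N(\ell_1+2\ell_2)$, whence $\overline{v}\ge n-\ell_1-2\ell_2$. Combining with $\overline{v}\le k$ gives $n-\ell_1-2\ell_2\le k$, i.e.\ $\ell_1+2\ell_2\ge n-k$. Note that $n$, $\ell_1$, $\ell_2$, and—through the area $A_k$ determining $k$—the value $k$ all depend only on $\poly{Q}$ and not on the chosen tiling; hence the quantity $\ell_1+2\ell_2-(n-k)$ is intrinsic to $\poly{Q}$, which is precisely what lets one conclude ``for a tiling (and therefore every tiling).''

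For the equality discussion I would simply track the two inequalities above. Tightness in \cref{lemma:vertexdegrees} forces every vertex to have degree $2$ or $3$, and tightness in the corner count forces each bounding step to be sharp: every reflex corner must sit at a degree-$2$ vertex (so every concave angle has degree two), and every flat corner must be paired with another flat corner at a degree-$2$ vertex. I expect the point needing the most care to be exactly this classification-and-weighting step—seeing why a reflex corner is charged twice as heavily as a flat corner, which is what produces the coefficient $2$ on $\ell_2$—while checking that the degenerate self-adjacent degree-$2$ vertices permitted by the definitions do not disturb the ``two corners summing to $2\pi$'' bookkeeping, since that count is by corner-incidences and is insensitive to whether the two corners come from one tile or two.
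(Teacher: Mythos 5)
Your proof is correct and follows essentially the same route as the paper's: the paper's (much terser) two-sentence proof likewise observes that a degree-$2$ vertex carries either two angles of measure $\pi$ or exactly one angle of measure greater than $\pi$, and then invokes \cref{lemma:vertexdegrees} to cap the average number of degree-$\ge 3$ corners per tile at $k$. Your expanded corner-counting bookkeeping---in particular the pairing that produces the coefficient $2$ on $\ell_2$, and the tightness analysis for the equality case---is exactly the argument the paper leaves implicit.
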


\begin{proof}
Take any tiling of $M$ by $\poly{Q}$. Each vertex of degree two in the tiling has either two angles of measure $\pi$ or exactly one angle of measure greater 
than $\pi$.
By Lemma~\ref{lemma:vertexdegrees},
$$\ell_1 + 2\ell_2 \ge n-k,$$
with equality precisely when every vertex has degree two or three, and every 
concave angle has degree 2.
\end{proof}

The following corollary proves \cref{cor:monohedral-tilings} among \emph{convex} polygonal tiles. 
\begin{corollary}\label{cor:heptagon-beats-convex}
The regular $k$-gon $\poly{R}_k$ has less perimeter than any non-equivalent convex polygonal tile of area $A_k = (k-6)\pi/3$.
\end{corollary}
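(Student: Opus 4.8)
The plan is to reduce the problem to \cref{prop:3-k} by stripping away the ``fake'' vertices of measure $\pi$ and showing that what remains is a genuine polygon with at most $k$ sides. Let $\poly{Q}$ be a convex polygonal tile of area $A_k$ on a closed hyperbolic surface $M$, say an $n$-gon, and let $\poly{R}_k$ be as usual. The strategy is: (i) use convexity plus \cref{cor:3} to bound the number of genuine vertices of $\poly{Q}$ by $k$; (ii) replace $\poly{Q}$ by the equivalent polygon $\poly{Q}'$ obtained by deleting the straight-angle vertices, noting this preserves area and perimeter; and (iii) apply \cref{prop:3-k} to conclude.

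First I would record that, since $\poly{Q}$ is convex, no interior angle exceeds $\pi$, so in the notation of \cref{cor:3} we have $\ell_2 = 0$, and $\ell_1$ counts the vertices of measure exactly $\pi$. \cref{cor:3} then gives $\ell_1 + 2\ell_2 = \ell_1 \ge n - k$. Deleting the $\ell_1$ vertices of measure $\pi$ yields a convex polygon $\poly{Q}' \sim \poly{Q}$ with $m := n - \ell_1 \le k$ genuine vertices, and $\poly{Q}'$ has the same area $A_k$ and the same perimeter as $\poly{Q}$ (removing straight-angle vertices changes neither). Because $A_k > 0$ for $k > 6$, the region $\poly{Q}'$ encloses positive area and hence has at least three genuine vertices, so $3 \le m \le k$.

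Now $\poly{Q}'$ is an $m$-gon of area exactly $A_k$ with $3 \le m \le k$, so \cref{prop:3-k} applies directly: the regular $k$-gon $\poly{R}_k$ has strictly less perimeter than any such $m$-gon unless that $m$-gon is $\poly{R}_k$ itself. Thus $P_k = P(\poly{R}_k) \le P(\poly{Q}') = P(\poly{Q})$, with equality only when $\poly{Q}' \sim \poly{R}_k$, i.e. $\poly{Q} \sim \poly{R}_k$; since $\poly{Q}$ is assumed non-equivalent to $\poly{R}_k$, the inequality is strict.

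There is little genuine difficulty here: the whole argument is essentially bookkeeping built on top of the two nontrivial earlier results. The substantive input is \cref{cor:3}, whose Gauss--Bonnet proof (via \cref{lemma:vertexdegrees}) is exactly what forces a convex tile to have at most $k$ genuine vertices. The only point requiring care---and the step I would double-check most carefully---is that passing from $\poly{Q}$ to the genuine polygon $\poly{Q}'$ is both area- and perimeter-preserving, and that the equality case of \cref{prop:3-k} corresponds precisely to the excluded equivalence $\poly{Q} \sim \poly{R}_k$, so that non-equivalence yields a strict inequality.
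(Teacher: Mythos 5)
Your proposal is correct and follows essentially the same route as the paper: use convexity to force $\ell_2=0$ in \cref{cor:3}, conclude that $\poly{Q}$ is equivalent to a polygon with at most $k$ sides, and finish with \cref{prop:3-k}. The extra bookkeeping you supply (that deleting straight-angle vertices preserves area and perimeter, and that $m\ge 3$) is implicit in the paper's shorter argument.
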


\begin{proof}
Let $\poly{Q}$ be a convex $n$-gonal tile of area $A_k$. By \cref{cor:3}, $\poly{Q}$ contains at least $n-k$ angles of measure $\pi$. Hence $\poly{Q}$ is equivalent to a polygon with at most $k$ sides. Unless $\poly{Q}$ is equivalent to $\poly{R}_k$, $\poly{Q}$ has strictly more perimeter by \cref{prop:3-k}.
\end{proof}

\begin{remark}
Although it is easy to show that an isoperimetric curvilinear triangular tile must actually be polygonal by straightening the edges, 
an extension to all curvilinear $k$-gons remains conjectural because straightening one edge of a tile might cause it to intersect another part of the tile. 
\end{remark}

\section{Regular Polygonal Tiles are Isoperimetric}

Our main result, \cref{thm:main-result}, proves that regular $k$-gons $\poly{R}_k$ of area $A_k=(k-6)\pi/3$ (with $120^\circ$ angles and perimeter $P_k$) are optimal, even when they don't tile. It provides similar estimates for interpolated areas.
It also allows for multihedral tilings, showing that the maximum perimeter of such tiles is greater than or equal to $P_k$.

The main difficulty concerns nonconvex tiles with many sides. Cutting corners saves perimeter, but the resulting shape does not necessarily tile. \cref{cover} shows that the collection of convex hulls of each tile's vertices of degree at least $3$ covers the surface, although generally with polygons of unequal areas and variable number of sides. Fortunately, by Gauss-Bonnet, the average number of sides is at most $k$ (\cref{cover}).
By a new concavity \cref{lem:area-n}, the $k$-gons enclose more area with the same perimeter, exhibiting a $k$-gon better than $\poly{R}_k$, a contradiction.

To ensure that the convex hulls of the high-degree vertices cover, we start with straightening and flattening processes for curvy edges and degree-$2$ vertices.

\begin{definition}[Flattening]
Consider a polygonal chain $ABC$ in $\textbf{H}^2$. To flatten vertex $B$ is to replace $ABC$ with the geodesic $AC$. For a hyperbolic surface, flattening is done in the universal cover $\H^2$.
\end{definition} 

\begin{lemma}
\label{lemma:flatten-immersed}
Consider immersed curvilinear polygons $\poly{P}$ and $\poly{Q}$ in a hyperbolic surface that share either a vertex $V$ and the incident edges or an edge. Replacing the edge with a geodesic or flattening $V$ in the covering $\H^2$ yields immersed curvilinear polygons whose union contains $\poly{P}$ and $\poly{Q}$.  
\end{lemma}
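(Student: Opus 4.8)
The plan is to reduce both cases---flattening the shared vertex $V$ and straightening the shared edge---to a single operation: replacing a shared boundary arc $\gamma$ running from a vertex $A$ to a vertex $C$ (either a single curvilinear edge, or the two-edge geodesic chain $AVC$) by the geodesic arc $\gamma' = AC$. Since $\poly P$ and $\poly Q$ meet along $\gamma$ from opposite sides, I first lift the picture to the universal cover. Because a curvilinear polygon is an immersed closed disk and disks are simply connected, the immersions of $\poly P$ and $\poly Q$ lift to immersions into $\H^2$; I choose the lifts so that $\gamma$ has a common lift $\tilde\gamma$, with $\tilde{\poly P}$ and $\tilde{\poly Q}$ on opposite sides. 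In $\H^2$ the geodesic $\gamma'$ is unique, and in the flattening case $\gamma$ and $\gamma'$ cobound an embedded convex geodesic triangle $T = AVC$, while in the edge case they cobound a region $L$ that is a disjoint union of lenses, one for each arc of $\gamma$ between consecutive crossings of $\gamma'$. Write $L$ for this region in either case.

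Next I construct the two new polygons. Orient $\gamma$ from $A$ to $C$ and say $\poly P$ lies on the side from which $\gamma'$ recedes (the reflex side at $V$, where the angle exceeds $\pi$), so that $L$ lies on $\poly Q$'s side of $\gamma$. On the $\poly P$ side I set $\poly P' := \poly P \cup_\gamma L$, gluing $L$ to $\poly P$ along $\gamma$: two immersed disks meeting along the boundary arc $\gamma$ from opposite sides glue to an immersed disk, whose boundary is $(\partial\poly P \setminus \gamma)\cup\gamma'$, so $\poly P'$ is an immersed curvilinear polygon with $\poly P' \supseteq \poly P$ and $L \subseteq \poly P'$. On the $\poly Q$ side I define $\poly Q'$ to be the flattened polygon, namely the immersed disk with boundary $(\partial\poly Q\setminus\gamma)\cup\gamma'$ obtained by pushing $\gamma$ across $L$ to $\gamma'$ within the immersion of $\poly Q$.

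The coverage claim then follows from two containments. First, $\poly P \subseteq \poly P'$ by construction. Second, $\poly Q \subseteq \poly Q' \cup L$: writing $\partial_0 = \partial\poly Q \setminus \gamma$ for the unchanged part of the boundary, the region $\poly Q$ bounded by $\partial_0$ and $\gamma$ is contained in the region bounded by $\partial_0$ and $\gamma'$ (namely $\poly Q'$) together with the region between $\gamma'$ and $\gamma$ (namely $L$); in the immersed setting this is the statement that the straight-line homotopy from $\gamma$ to $\gamma'$ sweeps through $L$, which I verify by a degree argument in $\H^2$. Since $L \subseteq \poly P'$, combining the two containments gives $\poly P \cup \poly Q \subseteq \poly P' \cup \poly Q'$. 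Projecting back by the covering map $\H^2 \to M$, which is a local isometry, sends immersed polygons to immersed polygons and preserves the containment, completing the proof.

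The main obstacle is the immersed bookkeeping on the $\poly Q$ side: when $\poly Q$ is thin or overlaps itself, $L$ need not be a subset of $\poly Q$, so $\poly Q'$ cannot be obtained by naive set subtraction, and one must construct it intrinsically as an immersed disk and justify $\poly Q \subseteq \poly Q' \cup L$ in spite of self-overlaps. Working in $\H^2$, where $L$ is an embedded convex geodesic triangle in the flattening case and a disjoint union of embedded lenses in the straightening case, is precisely what makes the degree argument tractable; the edge case additionally requires summing the containment over the several lens components cut out by the successive crossings of $\gamma$ and $\gamma'$.
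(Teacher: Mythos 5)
Your proof is correct and follows essentially the same route as the paper's: identify the region $R$ (your $L$) enclosed by the new geodesic and the edges it replaces, and observe that the union of the resulting polygons is $\poly{P}\cup\poly{Q}\cup R$, which contains $\poly{P}\cup\poly{Q}$. The paper's proof is only a two-line sketch of this; your lift to $\H^2$, lens decomposition, and degree argument supply the immersed-case bookkeeping that the paper leaves implicit.
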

\begin{proof}
Let $A$ and $B$ be the adjacent vertices of $V$. Let $R$ be the region enclosed by the new geodesic and the edges it replaced. Note that the union of of the resulting polygons is simply $P \cup Q \cup R$. 
\end{proof}

\begin{proposition}\label{cover}
Let $M$ be a closed hyperbolic surface tiled by curvilinear polygons $\poly{Q}_i$ of average area $A_k = (k-6)\pi/3$ for real $k>6$. Let $\poly{Q}_i^*$ be the convex hull of the vertices of degree three or higher of $\poly{Q}_i$. Then $\{\poly{Q}_i^*\}$ covers $M$ and the average number of sides is less than or equal to $k$. 
\end{proposition}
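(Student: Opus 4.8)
The plan is to establish the two assertions separately: the bound on the average number of sides follows directly from \cref{lemma:vertexdegrees}, while the covering statement is obtained from the flattening \cref{lemma:flatten-immersed}. I would treat the easy part first. Each hull $\poly{Q}_i^*$ is by definition the convex hull of the set of vertices of $\poly{Q}_i$ of degree at least three, so the vertices of $\poly{Q}_i^*$ form a subset of that set. In particular the number $s_i$ of sides of $\poly{Q}_i^*$ is at most the number $v_i$ of vertices of $\poly{Q}_i$ of degree at least three. Averaging over the $N$ tiles and invoking \cref{lemma:vertexdegrees}, which gives $\overline{v}\le k$, immediately yields $\overline{s}\le\overline{v}\le k$.

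For the covering, I would first reduce to geodesic tiles with no degree-two vertices. Applying \cref{lemma:flatten-immersed} once for each curvilinear edge replaces it by the geodesic with the same endpoints, and once for each degree-two vertex $V$ (with neighbors $A,C$) replaces the incident path $A$-$V$-$C$ by the geodesic $AC$; every such move is carried out in the universal cover $\H^2$, and by the lemma the union of the resulting immersed tiles only grows. Since flattening a degree-two vertex leaves the degrees of $A$ and $C$ unchanged, no new degree-two vertices are created, and because the tiling is finite, finitely many moves produce immersed geodesic polygons $\widetilde{\poly{Q}}_i$ whose vertex sets are \emph{exactly} the degree-$\geq 3$ vertices of the original $\poly{Q}_i$, with union still covering $M$.

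It then remains to check that each $\widetilde{\poly{Q}}_i$ lies inside the convex hull $\poly{Q}_i^*$ of its own vertices. For a \emph{simple} geodesic polygon this is clean: its boundary is a union of geodesic segments joining vertices of $\poly{Q}_i^*$, hence lies in $\poly{Q}_i^*$; and since the complement of the compact convex set $\poly{Q}_i^*$ in $\H^2$ is connected and unbounded, the bounded region enclosed by the polygon must also lie in $\poly{Q}_i^*$. Combining this with the previous paragraph gives $M\subseteq\bigcup_i\widetilde{\poly{Q}}_i\subseteq\bigcup_i\poly{Q}_i^*$, which is the desired covering.

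The main obstacle is this last containment in the immersed, rather than embedded, setting. A tile with only two high-degree vertices flattens to a degenerate bigon (a segment of zero area), and more generally the lift to $\H^2$ of a highly nonconvex tile need not be simple, so the simple-polygon argument does not apply verbatim. The resolution is that covering is a global statement: \cref{lemma:flatten-immersed} guarantees that whatever region is excised from a concave tile is absorbed by the tile sharing the flattened edge, so a degenerate or self-overlapping $\widetilde{\poly{Q}}_i$ simply contributes nothing while its former area is covered by a neighbor. The care required is in the bookkeeping—tracking the excess region $R$ of \cref{lemma:flatten-immersed} through all the moves and confirming that the total immersed union never loses a point of $M$—and I expect this to be where the real work of the proof lies.
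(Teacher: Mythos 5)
Your proposal is correct and follows essentially the same route as the paper: the side bound comes from \cref{lemma:vertexdegrees} because the hull has no more vertices than the tile has high-degree vertices, and the covering comes from straightening edges and flattening degree-two vertices via \cref{lemma:flatten-immersed} and then observing that each resulting immersed geodesic polygon lies in the convex hull of its vertices. In fact you are more careful than the paper's two-sentence proof, which simply asserts the final containment; your concern about degenerate or self-overlapping flattened tiles is legitimate and is resolved exactly as you suggest (the topological boundary of the immersed image lies on the geodesic edges, so the image cannot meet the connected unbounded complement of the hull).
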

\begin{proof}
By \cref{lemma:flatten-immersed}, straightening edges and flattening all degree-$2$ vertices yields a covering by immersed polygons, each covered by the corresponding $\poly{Q}_i^*$. Hence $\{\poly{Q}_i^*\}$ covers~$M$. By \cref{lemma:vertexdegrees},
the average number of sides is less than or equal to~$k$.
\end{proof}
\begin{remark}
\label{lemma:atleasttwo}
For fixed $n$, every tile in a tiling by curvilinear $n$-gons of a connected closed surface, other than a sphere or $\R P^2$, has at least two vertices of degree at least $3$. Indeed, suppose a tile has fewer than two vertices of degree at least $3$. Such a tile must share all edges with itself or another tile (and actually has no vertices of degree at least $3$). Since the surface is connected, there are no other tiles, and the surface is a sphere or $\R P^2$. 
\end{remark}

\begin{remark}
\cref{fig:multibad} illustrates an unbounded example in which the convex hulls of each tile's vertices of degree at least three do not cover the surface.
\end{remark}

\begin{figure}
\center
\includegraphics{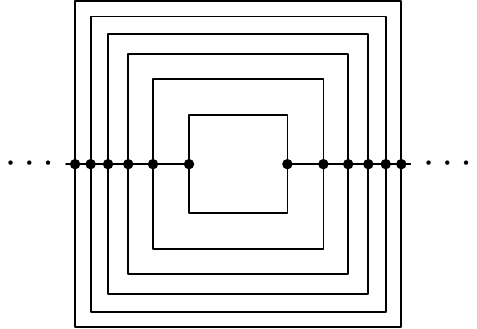}
\caption{ A tiling of the Euclidean plane by polygons of equal area, in which all vertices of degree three or more (here marked by dots) are collinear. The convex hull of these vertices is just a line, and certainly does not cover the plane.}
\label{fig:multibad}
\end{figure}

The concavity of the following area function for fixed perimeter is a crucial ingredient in the proof of the main result, \cref{thm:main-result}.

\begin{lemma} \label{lem:area-n}
The area of the regular $n$-gon with perimeter $P$ is given by
\begin{equation}
\label{eq:A(n)}
A(n) = 
\pi(n-2)-2 n \sin ^{-1}\left(\cos\alpha \sech\beta\right)
\end{equation}
where $\alpha = \pi/n$ and $\beta=P/2n$.
The function $A(n)$ is strictly increasing and strictly concave on $[2,\infty)$.
We extend $A(n)$ continuously to be identically $0$ on the interval $[0,2]$.
\end{lemma}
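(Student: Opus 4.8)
The plan is to change variables to $\alpha=\pi/n$ and to reduce the entire statement to one fact: the interior angle $\theta$ of the regular $n$-gon, as a function of $\alpha$, is strictly convex. First I would record $\theta$ in closed form. By \cref{prop:Gauss-Bonnet} the area equals $(n-2)\pi-n\theta$, and solving $P(n,\theta)=P$ in \cref{prop:perimeter-n-theta} gives $\sin(\theta/2)=\cos\alpha\,\sech\beta$, i.e. $\theta=2\arcsin(\cos\alpha\,\sech\beta)$, which reproduces \eqref{eq:A(n)}. The point of the substitution is that $\beta=P/2n=(P/2\pi)\,\alpha=:\lambda\alpha$ is a fixed multiple of $\alpha$; hence $\theta$, and thus everything in sight, is a function of the single variable $\alpha\in(0,\pi/2]$, with $n=\pi/\alpha$.

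Substituting $n=\pi/\alpha$ into $A=(n-2)\pi-n\theta$ gives the compact form $A=\pi(\pi-\theta)/\alpha-2\pi$. Converting $n$-derivatives to $\alpha$-derivatives with $d\alpha/dn=-\alpha^2/\pi$, a short computation yields
\[
\frac{dA}{dn}=(\pi-\theta)+\alpha\,\theta'(\alpha),\qquad
\frac{d^2A}{dn^2}=-\frac{\alpha^3}{\pi}\,\theta''(\alpha),
\]
the second obtained by differentiating the first once more. The second identity reduces strict concavity of $A(n)$ to strict convexity of $\theta(\alpha)$. Monotonicity then follows from the same fact: writing $G(\alpha)=dA/dn=(\pi-\theta)+\alpha\theta'$, we get $G'(\alpha)=\alpha\,\theta''(\alpha)$, while as $\alpha\to0^+$ one has $\theta\to\pi$ and $\alpha\theta'\to0$, so $G(0^+)=0$. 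Thus $\theta''>0$ forces $G$ to increase from $0$, giving $dA/dn=G>0$. So both assertions follow once $\theta$ is shown convex.

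To prove $\theta''>0$, set $\psi=\theta/2$, so $\sin\psi=\cos\alpha\,\sech(\lambda\alpha)$. Logarithmic differentiation gives $\cot\psi\,\psi'=-\tan\alpha-\lambda\tanh(\lambda\alpha)$, and differentiating again and simplifying yields
\[
\psi''=\tan\psi\Big[\sec^2\psi\,(\tan\alpha+\lambda\tanh\lambda\alpha)^2-\sec^2\alpha-\lambda^2\sech^2\lambda\alpha\Big].
\]
Since $0<\psi<\pi/2$ gives $\tan\psi>0$, convexity is equivalent to positivity of the bracketed expression. Evaluating $\sec^2\psi$ from $\sin\psi=\cos\alpha\,\sech\lambda\alpha$ and clearing the positive denominator, the bracket has the sign of a polynomial $D$ in $T=\tan\alpha$ and $S=\lambda\tanh(\lambda\alpha)$.

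I expect this final inequality $D>0$ to be the main obstacle. The terms of $D$ have degree $2$ and $4$ in $(T,S)$; its degree-$2$ part is exactly $-(\lambda^2T-S)^2\le0$, so the inequality is genuinely delicate: $D$ vanishes to fourth order at $\alpha=0$, and positivity must come from the quartic part $2\lambda^2T^3S+(2\lambda^2-1)T^2S^2+S^4$. The elementary bounds $\tan\alpha\ge\alpha$ and $\tanh(\lambda\alpha)\le\lambda\alpha$ (hence $S\le\lambda^2T$) render this quartic positive, but they do not suffice on their own, because $T$ and $S$ are rigidly coupled through $\beta=\lambda\alpha$; closing the argument requires using this coupling---e.g. tracking that $\lambda^2T-S=O(\alpha^3)$, so the negative square is of order $\alpha^6$ and is dominated by the quartic part, with the large-$\alpha$ regime controlled by the leading term $2\lambda^2T^3S$. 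Finally, continuity at the left endpoint is immediate: as $n\to2^+$, $\alpha\to\pi/2$ and $\cos\alpha\to0$, so $\theta\to0$ and $A(n)\to0$, matching the constant extension $A\equiv0$ on $[0,2]$.
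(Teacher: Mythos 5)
Your change of variables to $\alpha=\pi/n$ with $\beta=\lambda\alpha$, $\lambda=P/2\pi$, and the resulting identity $d^2A/dn^2=-(\alpha^3/\pi)\,\theta''(\alpha)$, is a genuinely different and rather elegant reduction: it converts both assertions of the lemma into the single claim that the interior angle $\theta(\alpha)$ is strictly convex, and your derivation of monotonicity from $G(0^+)=0$ and $G'=\alpha\theta''$ is a clean alternative to the paper's observation that a strictly concave function that stays positive on $[2,\infty)$ must be increasing. The computations you display check out: the formula for $\psi''$, and the identity $\lambda^2D=-(\lambda^2T-S)^2+2\lambda^2T^3S+(2\lambda^2-1)T^2S^2+S^4$ with $T=\tan\alpha$, $S=\lambda\tanh(\lambda\alpha)$, are both correct, as is the observation that the quadratic part is a perfect negative square.

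However, the proof is not complete: the decisive inequality $D>0$ is exactly where the entire difficulty of the lemma lives, and you establish it only at the two ends of the range. The expansion $\lambda^2T-S=\tfrac13\lambda^2(1+\lambda^2)\alpha^3+O(\alpha^5)$ shows the negative square is beaten by the quartic part (which is $\sim\lambda^4(1+\lambda^2)^2\alpha^4$) \emph{near} $\alpha=0$, and $2\lambda^2T^3S$ dominates as $\alpha\to\pi/2$, but ``is dominated by'' is not a proof on the middle of the interval---especially since $(2\lambda^2-1)T^2S^2$ is negative for every $\lambda<1/\sqrt{2}$ (small perimeter) and the needed control of $\lambda^2T-S$ must be uniform, not merely asymptotic. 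You acknowledge this yourself (``closing the argument requires using this coupling''), so what you have is a correct reduction plus an honest but unclosed endgame. For comparison, the paper escapes precisely this trap by a different final move: after reaching an analogous expression in $\tan\alpha$ and $\tanh\beta$, it fixes $n$ (hence $\alpha$), observes that the expression vanishes at $P=0$, and shows that its derivative with respect to $P$ is positive, which after one further substitution reduces to the genuinely trivial inequality $c+\frac{\alpha}{\cos\alpha}(1-c)>\frac{\sin\alpha}{1+\sin\alpha}$ for $0<c<1$ and $0<\alpha<\pi/2$. Grafting that ``integrate up from $P=0$'' device onto your $D$ (viewed as a function of $\lambda$ for fixed $\alpha$) would very likely close your argument; as written, it is a gap.
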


\begin{remark}
For nonintegral $n$, \cref{eq:A(n)} still holds when Equations~\eqref{eq:A-n-theta} and~\eqref{eq:P-n-theta} for $A(n,\theta)$ and $P(n,\theta)$ hold.
\end{remark}

\begin{proof}
By \cref{prop:perimeter-n-theta}, the perimeter of a regular $n$-gon with interior angle $\theta$ is given by
\[
P(n,\theta)=2n\cosh^{-1}\left(\frac{\cos(\pi/n)}{\sin(\theta/2)}\right),
\]
which increases from $0$ to $\infty$ as $\theta$ decreases from $(n-2)\pi/n$ to $0$. Solve for $\theta$ in the range $0<\theta<\pi$ to find
\[
\theta = 2\sin^{-1}\left(\cos\alpha\sech\beta\right).
\]
\cref{eq:A(n)} now follows from \cref{prop:Gauss-Bonnet}\eqref{eq:A-n-theta}.
To prove that $A(n)$ is strictly concave, remove a trivially negative factor from the second derivative $A''(n)$ and simplify, reducing the problem to showing that
\[
P^2 \tanh ^2\beta + \left(4 \pi ^2-P^2\right) \sech^2\beta + 
P^2 \cos ^2\alpha \sech^4\beta + 4 \pi  P \tan \alpha \tanh \beta - 4\pi^2 
\]
is positive for $n>2$ and $P>0$.
Substituting $T=\tan\alpha$ and $H=\tanh\beta$, rearranging terms, and simplifying give that it is sufficient to prove 
\[
P\left(H^2+T^2\right) - \sqrt{1+T^2}\cdot \left(PT-2\pi H\right)
\]
is positive. Since it vanishes at $P=0$, it suffices to show that the derivative with respect to $P$,
\[
\tanh^2(\alpha\beta)+\tan^2(\alpha)+2\alpha\beta\tanh(\alpha\beta)\sech^2(\alpha\beta)
-
\sec(\alpha)\left(\tan(\alpha) - \alpha\sech^2(\alpha\beta)\right),
\]
is positive for $0<\alpha<\pi/2$ and $\beta>0$. Substituting $c=\tanh^2(\alpha\beta)$ and simplifying reduces to showing that 
\[
c+\frac{\alpha}{\cos\alpha}(1-c) > \frac{\sin\alpha}{1+\sin\alpha}
\]
for $0<c<1$ and $0<\alpha<\pi/2$, 
which holds trivially.

Finally, strict monotonicity of $A(n)$ follows from strict concavity, since $A(n)$ remains positive for $n>2$.
\end{proof}

The following lemma and corollary are needed in the proof of the main \cref{thm:main-result} to handle the interval $[0,2)$ not covered by \cref{lem:area-n}.

\begin{lemma} \label{lemma:A-2A(k/2)}
Fix real $k >6.$ Consider $A(n)$ with fixed perimeter $P_k$. Then
\[A(k) < 2A\left(\frac{k}{2}\right).\]
\end{lemma}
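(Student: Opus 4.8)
The plan is to strip the statement down to a single-variable inequality. First, by \cref{def:AkPk} we have $A(k)=A_k=(k-6)\pi/3$, while if $\theta$ denotes the interior angle of the regular $(k/2)$-gon of perimeter $P_k$, then \cref{prop:Gauss-Bonnet} gives $A(k/2)=(k/2-2)\pi-(k/2)\theta$. Substituting and simplifying, the desired inequality $A(k)<2A(k/2)$ is equivalent to the angle bound
\[
\theta<\frac{2\pi}{3}-\frac{2\pi}{k}.
\]
So everything reduces to controlling the angle of the half-as-many-sided regular polygon of the same perimeter.

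Next I would compute $\theta$ explicitly. The $(k/2)$-gon of perimeter $P_k$ has half-side $s=P_k/k$, which is exactly the side length of $\poly{R}_k$. Applying \cref{prop:perimeter-n-theta} to $\poly{R}_k$ (whose angle is $2\pi/3$) gives $\cosh(s/2)=2\cos(\pi/k)/\sqrt3$, hence $\cosh s=(8\cos^2(\pi/k)-3)/3$; applying it to the $(k/2)$-gon gives $\sin(\theta/2)=\cos(2\pi/k)\sech s$. Writing $u=\cos^2(\pi/k)$, which ranges over $(3/4,1)$ as $k$ ranges over $(6,\infty)$, this becomes $\sin(\theta/2)=3(2u-1)/(8u-3)$, and the target is $\sin(\theta/2)<\sin(\pi/3-\pi/k)$ (both half-angles lie in $(0,\pi/2)$).

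The hard part is that $\sin(\pi/3-\pi/k)=\tfrac{\sqrt3}{2}\cos(\pi/k)-\tfrac12\sin(\pi/k)$ mixes $\cos(\pi/k)$ and $\sin(\pi/k)$, so the naive comparison leaves a cross term $\sqrt{u}\sqrt{1-u}$ that does not clear upon squaring. To get around this I would introduce an auxiliary angle $\psi$ --- geometrically the base angle of the isosceles corner triangle one removes in merging two adjacent sides of $\poly{R}_k$ into one side of a $(k/2)$-gon --- defined by $\sin\psi=1/(2\cosh(d/2))$ with $\cosh^2(d/2)=1+16u^2/3-4u$. A direct check shows $\sin(\pi/3-\psi)=\cos(2\pi/k)/\cosh(d/2)$, and since $\cosh^2 s-\cosh^2(d/2)=4u(4u-3)/9>0$ we get $\cosh(d/2)<\cosh s$ and therefore $\theta<2\pi/3-2\psi$. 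It then suffices to prove $\psi\ge\pi/k$; squaring $\sin\psi\ge\sin(\pi/k)=\sqrt{1-u}$ and clearing denominators reduces this to $64u^3-112u^2+60u-9\ge0$, which factors as $(4u-3)^2(4u-1)$ and is manifestly nonnegative on $[3/4,1)$, vanishing only at $u=3/4$ (i.e.\ $k=6$). Chaining $\theta<2\pi/3-2\psi\le2\pi/3-2\pi/k$ finishes the proof.

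I expect the genuine obstacle to be exactly this surd-clearing step: choosing the right intermediate quantity $d$ (equivalently $\psi$) so that the irrational $\sin(\pi/k)$ is isolated against something rational in $u$, after which the cubic factors cleanly. I would also note that all the formulas above make sense for real $k$, so no separate argument for noninteger $k$ is needed, and that the inequality is strict for every $k>6$ because $\cosh(d/2)<\cosh s$ strictly there.
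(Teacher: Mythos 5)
Your proof is correct, and while it opens with the same reduction as the paper---using \cref{prop:Gauss-Bonnet} and \cref{prop:perimeter-n-theta} to boil the claim down to $\sin(\theta/2)=\frac{3(2u-1)}{8u-3}<\sin\left(\frac{\pi}{3}-\frac{\pi}{k}\right)$ with $u=\cos^2(\pi/k)\in(3/4,1)$, which is exactly the paper's inequality in $\gamma=\cos(\pi/k)$---you resolve the surd obstruction by a genuinely different route. The paper squares the mixed inequality directly, obtaining a sextic $256\gamma^6-192\sqrt3\,\gamma^5-\cdots+27$, and rules out roots in $(\sqrt3/2,1)$ by evaluating six successive derivatives at $\gamma=\sqrt3/2$, then closes with a continuity/endpoint argument. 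You instead interpose the auxiliary angle $\psi$ with $\sin\psi=1/\left(2\cosh(d/2)\right)$ and $\cosh^2(d/2)=1+\tfrac{16u^2}{3}-4u$; I checked that indeed $\sin(\pi/3-\psi)=\cos(2\pi/k)\sech(d/2)$, that $\cosh^2 s-\cosh^2(d/2)=\tfrac{4u(4u-3)}{9}>0$ gives the strict step $\theta<2\pi/3-2\psi$, and that $\psi\ge\pi/k$ reduces to $64u^3-112u^2+60u-9=(4u-3)^2(4u-1)\ge0$. Your $d$ is precisely the chord spanning two adjacent sides of $\poly{R}_k$ and $\psi$ the base angle of the excised corner triangle, which makes the choice of intermediate quantity conceptually natural rather than ad hoc. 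What your approach buys is a lower-degree, cleanly factorable polynomial and strictness obtained directly from $\cosh(d/2)<\cosh s$ rather than from a separate no-equality-plus-continuity argument; what the paper's buys is brevity of setup at the cost of a heavier computation. Either proof is acceptable.
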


\begin{proof}

Let $\gamma = \cos(\pi/k),$ so $\sqrt{3}/2<\gamma<1.$ By \cref{eq:P-n-theta},
\begin{align*}
\cosh\left(\frac{P}{2k}\right) 
&= \frac{\cos (\pi/k)}{\sin (\pi/3)}
= \frac{2\gamma}{\sqrt{3}}.
\intertext{By \cref{eq:A-n-theta} and the double angle identities,}
A(k) & =(k-2)\pi - \frac{2k\pi}{3}, \\
2A\left(\frac{k}{2}\right) &= (k-4)\pi - 2k\sin^{-1}\left(\frac{2\gamma^2-1}{8\gamma^2/3-1}\right).
\end{align*}
Algebraic manipulation shows the desired inequality is
\[
\sin^{-1}\left(\frac{2\gamma^2-1}{8\gamma^2/3-1}\right) < 
\frac{\pi}{3}-\frac{\pi}{k}.
\]
Both sides lie in the interval $[-\pi/2,\pi/2]$, over which sine is increasing. Thus it is equivalent to show
\[
\left(\frac{2\gamma^2-1}{8\gamma^2/3-1}\right) < \sin\left(\frac{\pi}{3}-\frac{\pi}{k}\right)=\frac{1}{2}\left(\gamma\sqrt{3}-\sqrt{1-\gamma^2}\right).
\]
Equality is attained at $\gamma=\cos(\pi/6)=\sqrt{3}/2,$ and the inequality is trivial at $\gamma=1.$ It thus suffices to show equality is never attained in $(\sqrt{3}/2,1)$; there are many ways to do so, one of which we use here. After rearrangement, equality holds only at the roots of the equation
\[\left(2(2\gamma^2-1)-\gamma\sqrt{3}(8\alpha^2/3-1)\right)^2 = \left( (8\gamma^2/3-1)\cdot \sqrt{1-\gamma^2}\right)^2,\] and so only at the roots of the sixth degree polynomial 
\[256 \gamma ^6-192 \sqrt{3} \gamma ^5-112 \gamma ^4+168 \sqrt{3} \gamma ^3-60 \gamma ^2-36
\sqrt{3} \gamma +27.\]
The first through sixth derivatives of this polynomial, evaluated at $\gamma=\sqrt{3}/2,$ are all positive:
\[
6\sqrt{3}, \quad 384, \quad 2554\sqrt{3},\quad 31872,\quad 69120\sqrt{3},\quad 184320.
\]
Since the sixth derivative is constant, they remain positive. Hence, equality is never attained in $(\sqrt{3}/2,1)$, and so the desired strict inequality for $k>6$ follows.
\end{proof}

\begin{corollary}
\label{A(n/2)}
Fix real $k>6.$ Consider $A(n)$ with fixed perimeter $P_k$. For all real $n \ge k,$
\[A(n)< 2A\left(\frac{n}{2}\right).\]
\end{corollary}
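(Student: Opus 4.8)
The plan is to treat the inequality as a statement about the gap function $g(n) = 2A(n/2) - A(n)$ and to show it stays positive on $[k,\infty)$ by combining the base case already supplied by \cref{lemma:A-2A(k/2)} with a monotonicity argument powered by the concavity in \cref{lem:area-n}. Throughout, the perimeter is held fixed at $P_k$, so that $A(n)$ and $A(n/2)$ both denote areas of regular polygons of that common perimeter.

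First I would record a domain check. Since $k > 6$, every $n \ge k$ satisfies $n/2 \ge k/2 > 3$, so both arguments $n$ and $n/2$ lie in the open region $(2,\infty)$, where by \cref{lem:area-n} the function $A$ is differentiable and strictly concave. In particular, the degenerate extension of $A$ by $0$ on $[0,2]$ never enters, so the expressions below are unambiguous.

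Next, differentiating via the chain rule gives
\[
g'(n) = A'(n/2) - A'(n).
\]
Strict concavity of $A$ on $(2,\infty)$ makes $A'$ strictly decreasing there, and since $n/2 < n$ we obtain $A'(n/2) > A'(n)$, hence $g'(n) > 0$ for all $n \ge k$. Thus $g$ is strictly increasing on $[k,\infty)$.

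Finally, \cref{lemma:A-2A(k/2)} furnishes the base case $g(k) = 2A(k/2) - A(k) > 0$, and monotonicity upgrades this to $g(n) \ge g(k) > 0$ for every $n \ge k$, which is precisely the claimed strict inequality $A(n) < 2A(n/2)$. I do not expect a genuine obstacle here: the single point that needs care is verifying that both $n/2$ and $n$ remain in the strictly concave region so that the sign of $g'$ is unambiguous, and the hypothesis $k > 6$ guarantees exactly that.
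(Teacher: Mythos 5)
Your proposal is correct and follows essentially the same route as the paper: both use \cref{lemma:A-2A(k/2)} as the base case at $n=k$ and the strict concavity of $A$ from \cref{lem:area-n} to show the gap $2A(n/2)-A(n)$ cannot decrease as $n$ grows beyond $k$. The only difference is presentational --- you differentiate the gap function, while the paper writes the integrated version $A(n)-A(k)\le 2\left(A(n/2)-A(k/2)\right)$ directly.
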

\begin{proof}
By \cref{lemma:A-2A(k/2)},
\[A(k) < 2A\left(\frac{k}{2}\right).\]
Since $A$ is strictly concave on $[2,\infty) \supset [k/2,\infty)$ and is strictly increasing,
\begin{align*}
A(n) &= A(k) + \left(A(n)-A(k)\right) \\
&< 2A\left(\frac{k}{2}\right) + 2\left(A\left(\frac{n}{2}\right)-A\left(\frac{k}{2}\right)\right) \\
&= 2A\left(\frac{n}{2}\right). \qedhere
\end{align*}
\end{proof}

Recall that $A_k$ and $P_k$ are the area and perimeter of the regular polygon $\poly{R}_k$ with $120^\circ$ angles, extended formulaically to all real $k > 6$ and increasing in $k$ (\cref{def:AkPk}). Our main theorem shows that as $k$ ranges from $6$ to $\infty$ and the average area $A_k$ ranges from $0$ to $\infty$, some tile must have perimeter at least $P_k$, with equality only if $k$ is an integer and every tile is equivalent to the regular $k$-gon $\poly{R}_k$.

\mainresult* \addtocounter{theorem}{1}

\begin{proof}
By \cref{cover}, the collection of convex hulls $\poly{Q}_i^*$ of the vertices with degree at least $3$ on each tile covers $M$, and of course $P(\poly{Q}_i^*) \leq P(\poly{Q}_i) \leq P_k$ by assumption.  
Since the $\poly{Q}_i^*$ cover, 
\begin{equation}
\label{ave-area}
\frac{1}{N}\sum \text{Area}(\poly{Q}_i^*) \ge A_k.
\end{equation}
By \cref{cover}, the number of sides $n_i$ of $Q_i^*$ satisfy 
\[\frac{1}{N}\sum n_i \le k.\]
The areas can be estimated in terms of $A(n)$ for $P_k$ as
\begin{equation}
\sum \text{Area}(\poly{Q}_i^*) \le \sum A(n_i) \le
N \cdot A \left(\frac{\sum n_i}{N} \right) \le 
N \cdot A(k) = 
N \cdot A_k.
\label{eq:avehullarea}
\end{equation}
The first inequality follows from \cref{prop:reg-is-best} and the remark after \cref{prop:reg-decreasing}.
The second inequality follows from the concavity of $A(n)$ for $n\geq 2$ (\cref{lem:area-n}) and Jensen's inequality. 
If any of the $n_i$ are $0$ or $1$, choose some $n_i > k$, and use \cref{A(n/2)} first to replace $0+A(n_i)$ with $2A(n_i/2)$. 
If you run out of large enough $n_i$, the next inequality holds already.
The third inequality follows from the fact that $A(n)$ is strictly increasing (again \cref{lem:area-n}). The final equality holds by the definition of $A(n)$ for $P_k$.

By \cref{ave-area}, equality must hold in every inequality.
By the strict concavity of $A(n)$, equality in the second inequality implies that every $n_i = k$, which must therefore be an integer. Equality in the first inequality implies that every $\poly{Q}_i^*$ has area $A$. By \cref{prop:reg-is-best}, $\poly{Q}_i^*$ is the regular $k$-gon $\poly{R}_k$ of area $A_k$. Finally 
\[P(\poly{Q}_i) \ge P(\poly{Q}_i^*) = P_k,\]
and equality implies that $\poly{Q}_i \sim \poly{R}_k$.
\end{proof}

\cref{thm:main-result} immediately implies the following corollary on monohedral tilings.

\begin{corollary}[Monohedral Tilings]
\label{cor:monohedral-tilings}
For $k \geq 7$, any non-equivalent tile of area $A_k = (k-6)\pi/3$ of a closed hyperbolic surface has more perimeter than the regular $k$-gon $\poly{R}_k$ (whether or not $\poly{R}_k$ tiles). \end{corollary}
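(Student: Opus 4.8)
The plan is to deduce this directly from the main \cref{thm:main-result} by viewing a monohedral tiling as the special case of a tiling in which every face is congruent to a single prototile $\poly{Q}$. I would argue by contradiction: suppose $\poly{Q}$ is a tile of a closed hyperbolic surface $M$ with area $A_k$ satisfying $\poly{Q} \not\sim \poly{R}_k$, yet $P(\poly{Q}) \le P_k$.

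Since the tiling is monohedral, all $N$ faces are congruent to $\poly{Q}$. Hence each face has area exactly $A_k$, so the average area equals $A_k$, and each face has perimeter $P(\poly{Q}) \le P_k$, so every tile has perimeter at most $P_k$. These are precisely the hypotheses of \cref{thm:main-result} for the integer value $k \ge 7$. Applying the theorem, every tile---in particular $\poly{Q}$ itself---is equivalent to $\poly{R}_k$, contradicting $\poly{Q} \not\sim \poly{R}_k$. Therefore no non-equivalent tile of area $A_k$ can have perimeter at most $P_k$; that is, every such tile satisfies $P(\poly{Q}) > P_k$, which is the claim.

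I expect no genuine obstacle, since all of the geometric content resides in \cref{thm:main-result}. The only points meriting a sentence are that congruence of all faces forces the average-area hypothesis to hold with equality and simultaneously supplies the uniform perimeter bound, so that the general theorem applies verbatim to the monohedral setting; and that the conclusion holds \emph{whether or not} $\poly{R}_k$ tiles $M$, because \cref{thm:main-result} establishes its inequality through the covering-by-convex-hulls argument of \cref{cover} and never requires the regular polygon to occur in the tiling.
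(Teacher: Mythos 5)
Your proposal is correct and matches the paper exactly: the paper states that \cref{thm:main-result} ``immediately implies'' the corollary, and your contradiction argument---a monohedral tiling by a tile of area $A_k$ and perimeter at most $P_k$ satisfies the hypotheses of the main theorem, forcing equivalence to $\poly{R}_k$---is precisely that intended deduction.
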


\begin{remark}
It remains an open question whether \cref{cor:monohedral-tilings} extends to the hyperbolic plane, where matching discrepancies might be pushed off to infinity. Similarly considering large regions does not work, because truncation effects are too large.
\end{remark}

The following proposition shows  that in some sense the area of the regular hexagon $R_k$ increases more rapidly than the perimeter as the number of sides increases.

\begin{proposition}[Perimeter Ratio]
\label{PerimeterRatio}
For real $k>6$, $P_k/A_k$ is a (strictly) decreasing function of $k$.
\end{proposition}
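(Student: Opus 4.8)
The plan is to reparametrize by $u = \pi/k$, which ranges over $(0,\pi/6)$ as $k$ ranges over $(6,\infty)$ and which reverses order (larger $k$ means smaller $u$). By \cref{def:AkPk} and \cref{eq:P-n-theta} we have $A_k = (k-6)\pi/3$ and $P_k = 2k\cosh^{-1}\!\big(2\cos(\pi/k)/\sqrt3\big)$, so substituting $k = \pi/u$ collapses the ratio to
\[
\frac{P_k}{A_k} \;=\; \frac{\phi(u)}{\pi/6 - u} \;=:\; F(u), \qquad \phi(u) := \cosh^{-1}\!\left(\frac{2\cos u}{\sqrt3}\right).
\]
Because the substitution reverses order, it suffices to show that $F$ is strictly \emph{increasing} on $(0,\pi/6)$.

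Next I would record two elementary facts: on $(0,\pi/6)$ the argument $2\cos u/\sqrt3$ exceeds $1$, so $\phi(u)>0$ and $\pi/6-u>0$; and at the right endpoint $\phi(\pi/6) = \cosh^{-1}(1) = 0$. Differentiating, $F'(u)$ has the same sign as the numerator
\[
G(u) := \phi(u) + \phi'(u)\left(\frac{\pi}{6}-u\right),
\]
the denominator being a positive square. The key observation is that $G(u)$ is precisely the value at $x=\pi/6$ of the tangent line to $\phi$ at $u$. Hence if $\phi$ is strictly concave, the tangent line lies strictly above the graph at the distinct point $x=\pi/6$, giving $0 = \phi(\pi/6) < G(u)$ and therefore $F'(u) > 0$.

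Everything thus reduces to the strict concavity of $\phi$ on $(0,\pi/6)$. A direct computation gives $\phi'(u) = -2\sin u/\sqrt{4\cos^2 u - 3}$, and then, after clearing denominators and using $4\cos^2 u - 3 + 4\sin^2 u = 1$,
\[
\phi''(u) = -\frac{2\cos u}{(4\cos^2 u - 3)^{3/2}}.
\]
On $(0,\pi/6)$ we have $\cos u > 0$ and $4\cos^2 u - 3 > 0$, so $\phi'' < 0$, which is exactly the required strict concavity. Assembling these pieces yields $F' > 0$, hence $P_k/A_k$ is strictly decreasing in $k$.

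I expect the main obstacle to be the second-derivative calculation together with getting its sign right: the simplification works only because the bracketed trigonometric expression collapses to $4\cos^2 u + 4\sin^2 u - 3 = 1$, and the denominator $(4\cos^2 u - 3)^{3/2}$ is real and positive precisely on the relevant range $u < \pi/6$. A secondary point requiring care is the singular boundary behavior at $u = \pi/6$, where $\phi \to 0$ with infinite slope; evaluating the concavity inequality at the endpoint $x = \pi/6$ (rather than analyzing $F$ or $G$ there directly) sidesteps this difficulty entirely.
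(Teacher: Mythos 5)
Your proof is correct, and while the reduction is essentially the paper's (the paper substitutes $x=\pi/6-\pi/k$ and studies $A_k/P_k = x/\cosh^{-1}\!\big(\cos(\pi/6-x)/\sin(\pi/3)\big)$, which is exactly the reciprocal of your $F(u)$ under $x=\pi/6-u$), the key monotonicity step is handled quite differently. The paper differentiates the one-variable ratio directly, quotes the resulting expression from Wolfram Alpha, and asserts its positivity by inspection; your argument instead identifies the numerator of $F'$ as the value at $x=\pi/6$ of the tangent line to $\phi(u)=\cosh^{-1}(2\cos u/\sqrt3)$ at $u$, and derives positivity from $\phi(\pi/6)=0$ together with the strict concavity of $\phi$, which you verify by the clean computation $\phi''(u)=-2\cos u\,(4\cos^2u-3)^{-3/2}<0$ (the collapse $4\cos^2u-3+4\sin^2u=1$ checks out). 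Your route is more elementary and fully hand-verifiable, replacing a computer-assisted derivative with a two-line concavity check; it also cleanly restricts attention to the relevant range $u\in(0,\pi/6)$, whereas the paper's positivity claim is stated on the larger interval $0<x<1$, where it is not actually needed. One small point worth making explicit: the strict inequality $L(\pi/6)>\phi(\pi/6)$ at the boundary point (where $\phi'\to-\infty$) follows because $h(x)=L(x)-\phi(x)$ satisfies $h(u)=0$ and $h'=\phi'(u)-\phi'>0$ on $(u,\pi/6)$, so $h$ is strictly increasing up to the endpoint and its boundary value exceeds any interior value; a naive pointwise limit of the strict interior inequality would only give $\geq$.
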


\begin{proof}
By \cref{prop:Gauss-Bonnet} and \cref{prop:perimeter-n-theta}, in terms of $x=\pi/6-\pi/k$,
\[\frac{\pi^2}{9}\frac{A_k}{P_k}= \frac{x}{ \cosh^{-1}\left(\frac{\cos(\pi/6-x)}{\sin(\pi/3)}\right)}.\]
It suffices to show that the right hand side is strictly increasing in $x$ for $0<x<1$. 
By Wolfram Alpha, its derivative is given by
   \[\frac{x \sin \left(\frac{\pi }{6}-x\right)}{\sin(\pi/3)\cos ^{-1}\left(\frac{\cos
   \left(\frac{\pi }{6}-x\right)}{\sin(\pi/3)}\right)^2 \sqrt{\frac{\cos
   \left(\frac{\pi }{6}-x\right)}{\sin(\pi/3)}-1} \sqrt{\frac{\cos \left(\frac{\pi
   }{6}-x\right)}{\sin(\pi/3)}+1}}+\frac{1}{\cosh ^{-1}\left(\frac{ \cos
   \left(\frac{\pi }{6}-x\right)}{\sin(\pi/3)}\right)},\]
    which is positive for $0<x<1.$
    
    Hence, $P_k/A_k$ is strictly decreasing for $k>6$. 

\end{proof}

The following corollary shows in particular that reducing the area per tile of a monohedral tiling increases the total perimeter.

\begin{corollary}[Total Perimeter]
A tiling of a closed hyperbolic surface by $\poly{R}_k$ has less total perimeter than any nonequivalent tiling by polygons of equal perimeter $P$ and average area $A_m \le A_k$. 
\end{corollary}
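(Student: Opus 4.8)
The plan is to express both total perimeters in terms of the common total area and reduce the claim to the single ratio inequality $P_k/A_k < P/A_m$, which then follows by combining \cref{thm:main-result} with the Perimeter Ratio \cref{PerimeterRatio}. Let $\mathcal A$ be the total area shared by the two surfaces (e.g.\ the same surface $M$), which is what makes ``total perimeter'' comparable. Since every edge bounds exactly two faces, the total perimeter of a tiling is $\tfrac12$ the sum of its tile perimeters. The $\poly{R}_k$-tiling has $\mathcal A/A_k$ tiles of perimeter $P_k$, for total perimeter $\tfrac{\mathcal A}{2}\,\tfrac{P_k}{A_k}$; the competitor has $\mathcal A/A_m$ tiles of perimeter $P$, for total perimeter $\tfrac{\mathcal A}{2}\,\tfrac{P}{A_m}$. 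So it suffices to prove $\tfrac{P_k}{A_k}<\tfrac{P}{A_m}$.

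Writing $A_m=(m-6)\pi/3$ as in \cref{def:AkPk}, positivity of the average tile area forces $m>6$, and $A_m\le A_k$ together with monotonicity of $j\mapsto A_j$ gives $6<m\le k$; set $P_m=P(m,2\pi/3)$. I would then apply \cref{thm:main-result} with $m$ in place of $k$ to the competing tiling, whose tiles all have perimeter $P$ and average area $A_m$: if $P\le P_m$ held, the theorem would force $m$ integral and every tile equivalent to $\poly{R}_m$, hence $P=P_m$. Thus in all cases $P\ge P_m$, with equality only if the competitor is equivalent to the $\poly{R}_m$-tiling.

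Finally I would invoke \cref{PerimeterRatio}: because $P_k/A_k$ is \emph{strictly} decreasing and $m\le k$, we get $P_m/A_m\ge P_k/A_k$, with equality only if $m=k$. Chaining the two estimates yields $\tfrac{P}{A_m}\ge\tfrac{P_m}{A_m}\ge\tfrac{P_k}{A_k}$. Equality throughout would require both $P=P_m$ and $m=k$, i.e.\ the competitor equivalent to $\poly{R}_m=\poly{R}_k$; since it is assumed nonequivalent, at least one inequality is strict, giving the desired $\tfrac{P_k}{A_k}<\tfrac{P}{A_m}$ and hence strictly smaller total perimeter.

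The computations here are entirely routine, so the only real work is the equality analysis: one must check that the two equality clauses --- one from \cref{thm:main-result} (forcing equivalence to $\poly{R}_m$) and one from strict monotonicity in \cref{PerimeterRatio} (forcing $m=k$) --- cannot simultaneously hold for a tiling that is nonequivalent to $\poly{R}_k$. I expect this bookkeeping, together with confirming that $m>6$ so that both cited results legitimately apply, to be the only point requiring care.
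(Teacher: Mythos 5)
Your proposal is correct and follows essentially the same route as the paper: both reduce the claim to the chain $P/A_m \ge P_m/A_m \ge P_k/A_k$, obtaining $P \ge P_m$ from \cref{thm:main-result} applied with $m$ in place of $k$ and the second inequality from \cref{PerimeterRatio}, with the equality analysis ruling out a nonequivalent competitor. Your write-up is merely more explicit than the paper's about the tile count $\mathcal A/A_m$, the check that $m>6$, and which equality clause fails.
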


\begin{proof}
Let $A$ denote the total area of the surface. By \cref{thm:main-result} and \cref{PerimeterRatio}, the competing total perimeter is greater than or equal to the total perimeter of the $\poly{R}_k$ tiling:
$$P\frac{A}{A_m} \geq P_m\frac{A}{A_m} \geq P_k\frac{A}{A_k},$$

\medskip
\noindent with equality only if all the tiles are equivalent to $\poly{R}_k$. 
\end{proof}

Our methods more easily yield the following weak version of \citeauthor{hales}'s hexagonal honeycomb theorem \cite{hales}. For details, see Proposition 10.5 of \citeauthor{digiosia}.

\begin{proposition}[Euclidean Hexagons]
Consider a curvilinear polygonal tiling of a flat torus with tiles of average area $A$. Then some tile has at least as much perimeter as the regular hexagon $R_6$ of area $A$, with equality only if every tile is equivalent to $R_6$.
\end{proposition}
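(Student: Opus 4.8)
The plan is to rerun the proof of \cref{thm:main-result} with $k=6$, substituting three Euclidean ingredients for their hyperbolic counterparts. Write $P=P(R_6)$ for the perimeter of the regular hexagon of area $A$, and let $A(n)$ denote the area of the regular Euclidean $n$-gon of perimeter $P$, so that $A(6)=A$. I need: (i) a Gauss--Bonnet bound giving an average of at most six sides per tile after flattening; (ii) strict monotonicity and concavity of $A(n)$ on $[2,\infty)$; and (iii) the classical Euclidean fact that among $n$-gons of fixed perimeter the regular one has greatest area (the analog of \cref{prop:reg-is-best}).

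First I would set up the combinatorics. The flattening and straightening of \cref{lemma:flatten-immersed} take place in the universal cover, here $\R^2$, so they apply verbatim. Since a flat torus has $\int G = 2\pi\chi = 0$, repeating the count of \cref{lemma:vertexdegrees} gives $0 \le \sum_i\bigl(1-v_i/6\bigr)$, hence an average of at most six vertices of degree $\ge 3$ per tile, with equality exactly when every vertex has degree two or three. As in \cref{cover}, the convex hulls $Q_i^*$ of the degree-$\ge 3$ vertices then cover the torus with $\tfrac1N\sum_i n_i \le 6$, where $n_i$ is the number of sides of $Q_i^*$. Because a torus is neither a sphere nor $\R P^2$, the remark following \cref{cover} shows each tile has at least two vertices of degree $\ge 3$, so $n_i\ge 2$ for all $i$ and the degenerate cases handled by \cref{A(n/2)} in the main proof do not arise.

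With these in hand the estimate is formal. Computing the area as one-half the perimeter times the apothem gives $A(n)=\tfrac{P^2}{4n}\cot(\pi/n)$, which increases to the area $P^2/4\pi$ of the circle of circumference $P$. Granting concavity, the chain from \cref{thm:main-result} repeats line for line: the covering gives $\sum_i \mathrm{Area}(Q_i^*) \ge NA$; ingredient (iii) together with monotonicity of $A(n)$ in the perimeter gives $\mathrm{Area}(Q_i^*) \le A(n_i)$; and Jensen's inequality with $\tfrac1N\sum_i n_i\le 6$ gives $\sum_i A(n_i) \le N\,A\bigl(\tfrac1N\sum_i n_i\bigr) \le N\,A(6) = NA$. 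Under the hypothesis that every tile has perimeter at most $P$, equality is forced throughout; strict concavity makes each $n_i=6$, ingredient (iii) makes each $Q_i^*$ a regular hexagon of area $A$, and then $P(Q_i)\ge P(Q_i^*)=P$ forces $P(Q_i)=P$ and $Q_i\sim R_6$. Hence some tile always has perimeter at least $P$, with the maximum equal to $P$ only when every tile is equivalent to $R_6$.

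The only genuinely new analytic input, and so the main thing to verify, is the concavity of $A(n)$ (the Euclidean analog of \cref{lem:area-n}). Writing $\alpha=\pi/n\in(0,\pi/2]$ and $h(\alpha)=\alpha\cot\alpha$, one has $A(n)=\tfrac{P^2}{4\pi}h(\alpha)$, and a direct computation shows $A''(n)$ has the sign of $2h'(\alpha)+\alpha h''(\alpha)$. Multiplying by the positive factor $\tfrac12\sin^2\alpha$ reduces the required inequality to $\sin\alpha\cos\alpha - 2\alpha + \alpha^2\cot\alpha < 0$ on $(0,\pi/2]$, and I expect to close this cleanly: after multiplying by $\sin\alpha$ the resulting expression vanishes at $\alpha=0$ and has derivative $-\sin\alpha\,(3\sin^2\alpha+\alpha^2)$, which is manifestly negative on $(0,\pi/2]$, so the expression is negative there. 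Strict monotonicity of $A(n)$ then follows from concavity and positivity, just as in \cref{lem:area-n}.
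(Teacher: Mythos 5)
Your proposal is correct and follows exactly the route the paper intends: the paper gives no proof of its own here, deferring to Proposition 10.5 of Di Giosia et al.\ with the comment that ``our methods more easily yield'' the result, and your argument is precisely that adaptation of \cref{thm:main-result} with $k=6$, $\chi=0$ in \cref{lemma:vertexdegrees}, and the Euclidean area function $A(n)=\tfrac{P^2}{4n}\cot(\pi/n)$; your concavity computation (reducing $A''(n)<0$ to $\sin\alpha\cos\alpha-2\alpha+\alpha^2\cot\alpha<0$ via the derivative $-\sin\alpha\,(3\sin^2\alpha+\alpha^2)$) checks out. The one mild deviation is that you dispose of the degenerate $n_i<2$ case by citing the remark after \cref{cover} rather than by a Euclidean analog of \cref{A(n/2)}; that remark's argument does apply to the torus, and in any case the fallback inequality $A(n)<2A(n/2)$ reduces to $\tan^2(\pi/n)<1/2$, which holds for $n>6$.
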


\section*{Acknowledgements} This work is a product of the 2019 Summer Undergraduate Mathematics Research program at Yale (SUMRY) under the guidance of Frank Morgan of Williams College. The authors greatly thank Morgan for his help and insight over the many weeks spent researching and writing this paper. We thank the Young Mathematicians Conference (YMC) and Yale for supporting our trip to present at the 2019 YMC in Columbus, Ohio.

\newpage

\printbibliography

\end{document}